\tikzstyle{block} = [rectangle, draw,
\tikzstyle{line} = [draw, -latex', thick]
\tikzset{node distance = 2cm}
\theoremstyle{thmstyleone}%
\newtheorem{theorem}{Theorem}[section]
\newtheorem{proposition}[theorem]{Proposition}% 
\newtheorem{remark}[theorem]{Remark}%
\newtheorem{lemma}[theorem]{Lemma}%
\newtheorem{corollary}[theorem]{Corollary}
\newtheorem{principle}[theorem]{Principle}
\theoremstyle{definition}
\newtheorem{definition}[theorem]{Definition}%
\newtheorem{analogy}{Analogy}
\begin{document}

\title[Faithfulness and fractal (quasi-)equivalence principles for expansions]{Faithfulness and fractal (quasi-)equivalence principles for Perron, Engel, and Pierce expansions}

\author{Mykola Moroz}
\address{\emph{M.Moroz}: Department of Dynamical Systems and Fractal Analysis, Institute of Mathematics of NAS of Ukraine, Tereschenkivska 3, 01024 Kyiv, Ukraine}

\begin{abstract}
We establish several unifying principles that clarify the fractal properties of classical number expansions, which are generalized by the Perron expansions. In particular, we prove the fractal equivalence principle for the positive and alternating Perron expansions, the fractal quasi-equivalence principle for the~classical and modified Engel expansions, and the fractal quasi-equivalence principle for the Pierce expansions in the Perron and traditional notations. These results explain several known analogies and show that the Hausdorff dimension of sets defined by one expansion often coincides with that for another. The proofs rely on faithful families of coverings. In addition to deriving a~range of known theorems as direct corollaries of previous results, our approach yields new fractal properties of the Engel and Pierce expansions and provides a systematic framework for transferring Hausdorff dimension properties between different expansions.
\end{abstract}

\keywords{Perron expansions, Engel expansions, Pierce expansions, faithfulness, Hausdorff dimension}

\subjclass[2020]{Primary 11K55; Secondary 28A80, 28A78}

\maketitle

\tableofcontents

\section{Introduction}\label{sec1}

Fractal properties of number expansions form a central topic in modern metric number theory. The positive and alternating Perron expansions, introduced and investigated in \cite{Moroz2025,Moroz2024}, generalize many classical constructions. They include as special cases the positive and alternating L\"{u}roth expansions, the classical and modified Engel expansions, the~DKB-expansion, as well as the Pierce, Sylvester, and restricted Oppenheim expansions. The guiding idea behind these generalizations is not merely to provide new expansions of real numbers, but to uncover reasons for systematic analogies between metric and fractal theories associated with different expansions.

In particular, \cite{Moroz2025} established that the metric theories of the positive and alternating Perron expansions are equivalent: a digit-preserving bijection also preserves the Lebesgue measure, allowing results for one expansion to be deduced directly from the other. For example, several well-known theorems by~R\'{e}nyi~\cite{Renyi1962-1} and Shallit~\cite{Shallit1986} become direct corollaries of each other when viewed through this unified lens. However, while the metric equivalence of these expansions is now well understood, a corresponding theory for fractal properties has not yet been systematically developed.

The aim of this paper is to fill this gap by establishing fractal analogues of these equivalence results in metric theory. More precisely, we introduce and prove three principles that clarify the relationships between the fractal theories of Perron, Engel, and Pierce expansions. These principles not only explain a number of known analogies, but also allow one to transfer results on Hausdorff dimensions from one expansion to another without repeating lengthy proofs.

The main results of this paper are the establishment of the fractal equivalence principle for Perron expansions and the fractal quasi-equivalence principles for Engel and Pierce expansions. The first principle states that problems concerning the Hausdorff dimension of sets defined via the alternating Perron expansion are equivalent to the~corresponding problems for the positive Perron expansion. This principle applies, for instance, to the~modified Engel and Pierce expansion. The second and third principles allow the systematic transfer of numerous fractal properties of the~classical Engel expansion to its modified version, and hence to the Pierce expansions in traditional and Perron notations. 

These principles allow researchers to focus on establishing only those fractal properties of the modified Engel expansions that cannot be deduced from the classical ones using the proposed method. Furthermore, computing the Hausdorff dimension for sets defined via the Pierce expansion becomes unnecessary: each such problem reduces to an analogous one for the modified Engel expansion.

This paper is organized as follows. Section 2 introduces the basic definitions and notation related to Perron expansions. Section 3 discusses faithful families of coverings generated by Perron expansions. Section 4 presents the main results of this paper: the fractal equivalence principle for Perron expansions, the fractal quasi-equivalence principle for Engel expansions, and the fractal quasi-equivalence principle for Pierce expansions. Section 5 revisits several classical fractal results for the Pierce expansion, deriving them as a corollary of earlier results for the~classical or the modified Engel expansions using the proposed principles. Finally, Section 6 establishes new properties of the modified Engel and Pierce expansions by combining known properties of the classical Engel expansion with new fractal principles.

\section{Preliminaries}
We first provide the basic definitions and properties of Perron expansions.

\begin{definition}[\cite{Moroz2025,Moroz2024}]
	A \textit{Perron expansion} of $x\in(0,1]$ is a representation of one of the following two forms:
	\begin{itemize}
		\item \textbf{Positive Perron expansion}
		\begin{gather}\label{PosPerronSer}
			x=\frac{r_0}{p_1}+\sum_{n=1}^{\infty}\frac{r_0\cdots r_n}{(p_1-1)p_1\cdots(p_n-1)p_n p_{n+1}},
		\end{gather}
		\item \textbf{Alternating Perron expansion}
		\begin{gather}\label{AltPerronSer}
			x=\frac{r_0}{q_1-1}+\sum_{n=1}^{\infty}\frac{(-1)^n r_0\cdots r_n}{(q_1-1)q_1\cdots(q_n-1)q_n(q_{n+1}-1)},
		\end{gather}
	\end{itemize}
	where $(r_n)_{n=0}^{\infty}$, $(p_n)_{n=1}^{\infty}$, and $(q_n)_{n=1}^{\infty}$ are sequences of natural numbers satisfying $$p_n \geq r_{n-1}+1 \quad\text{and}\quad q_n \geq r_{n-1}+1 \quad (n\in\mathbb{N}).$$
\end{definition}

Fix a sequence $P=(\varphi_n)_{n=0}^\infty$ of functions, where $\varphi_0\in\mathbb{N}$ is constant and $\varphi_n\colon\mathbb{N}^n \to \mathbb{N}$ for $n\in\mathbb{N}$.

\begin{definition}[\cite{Moroz2024}]
	If $r_0=\varphi_0$ and $r_n=\varphi_n(p_1,\ldots,p_n)$ for $n\in\mathbb{N}$, then the positive Perron expansion~\eqref{PosPerronSer} is called the \textit{$P$-representation} (or $P$-expansion) of $x$ and is denoted by $\Delta^P_{p_1 p_2 \ldots}$.
\end{definition}

\begin{definition}[\cite{Moroz2025}]
	If $r_0=\varphi_0$ and $r_n=\varphi_n(q_1,\ldots,q_n)$ for $n\in\mathbb{N}$, then the alternating Perron expansion~\eqref{AltPerronSer} is called the \textit{$P^-$-representation} (or $P^-$-expansion) of $x$ and is denoted by $\Delta^{P^-}_{q_1 q_2 \ldots}$.
\end{definition}

For any sequence $P$, every $x\in(0,1]$ has a unique $P$-representation and at most one $P^-$-representation (see~\cite{Moroz2025,Moroz2024}). If $x=\Delta^P_{p_1 p_2\ldots}$, then $p_n=p_n(x)$ is called the \textit{$n$th $P$-digit} of $x$. If $x=\Delta^{P^-}_{q_1 q_2\ldots}$, then $q_n=q_n(x)$ is called the \textit{$n$th $P^-$-digit} of $x$.

\begin{definition}[\cite{Moroz2025,Moroz2024}]
	For natural numbers $c_1,\ldots,c_k$ satisfying 
	$c_i \geq \varphi_i(c_1,\ldots,c_{i-1})+1$ for $i\leq k$, the~set 
	$$\Delta^P_{c_1\ldots c_k}=\{x\in(0,1]\colon p_1(x)=c_1,\ldots,p_k(x)=c_k\}$$ is called the \emph{$P$-cylinder of rank $k$ with base $c_1\ldots c_k$}, and the set $$\Delta^{P^-}_{c_1\ldots c_k}=\{x\in(0,1]\colon q_1(x)=c_1,\ldots,q_k(x)=c_k\}$$ is called the \emph{$P^-$-cylinder of rank $k$ with base $c_1\ldots c_k$}.
\end{definition}

The set $IS^{P^-}$ of all numbers from $(0,1]$ that do not have a $P^-$-representa\-tion is countable and consists precisely of the infima and suprema of $P^-$-cylinders:
$$IS^{P^-}=\left\{x\in(0,1]\colon x=\inf\Delta^{P^-}_{c_1\ldots c_k} \text{ ~or~ }  x=\sup\Delta^{P^-}_{c_1\ldots c_k}\text{ for some }P^-\text{-cylinder }\Delta^{P^-}_{c_1\ldots c_k}\right\}.$$

\begin{proposition}[\cite{Moroz2025,Moroz2024}]
	Each $P$-cylinder has the form $(a,b]$. Each $P^-$-cylinder has the form $\mbox{$(a,b)\setminus IS^{P^-}$}$. The~$P$-cylinder and the $P^-$-cylinder with the same base $c_1\ldots c_k$ have the same diameter, given by
	\begin{gather}
		|\Delta^P_{c_1 \ldots c_k}|=|\Delta^{P^-}_{c_1 \ldots c_k}|=\frac{r_0 \cdots r_{k-1}}{(c_1-1)c_1\cdots(c_k-1)c_k},\label{cyl}
	\end{gather}
	where $|\cdot|$ denotes the diameter of the set, $r_0=\varphi_0$, and $r_n=\varphi_n(c_1,\ldots,c_n)$ for all $n=1,\ldots,k-1$.
\end{proposition}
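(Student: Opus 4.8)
The plan is to obtain all three assertions at once by ``peeling off'' the first $k$ digits and recognizing the remaining tail of the series as a Perron expansion for a shifted sequence of functions. Fix an admissible base $c_1,\ldots,c_k$, put $r_0=\varphi_0$ and $r_n=\varphi_n(c_1,\ldots,c_n)$ for $1\le n\le k$, and abbreviate $\rho:=r_0\cdots r_{k-1}$ and $\pi:=(c_1-1)c_1\cdots(c_k-1)c_k$, so that \eqref{cyl} asserts the common diameter $\rho/\pi$. Introduce the shifted sequence $Q=(\psi_n)_{n\ge0}$ given by $\psi_0:=r_k$ and $\psi_n(x_1,\ldots,x_n):=\varphi_{k+n}(c_1,\ldots,c_k,x_1,\ldots,x_n)$. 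A routine check shows that a word $d_1d_2\ldots$ is $Q$-admissible if and only if $c_1\ldots c_k d_1d_2\ldots$ is $P$-admissible, and that the numerators $Q$ generates from $d_1d_2\ldots$ coincide with the numerators appearing at positions $\ge k$ in the $P$-series of $c_1\ldots c_k d_1d_2\ldots$.

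For the positive case I would take $x\in\Delta^P_{c_1\ldots c_k}$, so that $p_i(x)=c_i$ for $i\le k$, split the series \eqref{PosPerronSer} at $n=k$, and factor $\rho/\pi$ out of the tail, obtaining $x=S_k+(\rho/\pi)\,y$, where $S_k$ is the sum of the first $k$ terms of \eqref{PosPerronSer}, which depends only on $c_1,\ldots,c_k$, and $y$ is precisely the positive $Q$-expansion $\Delta^{Q}_{p_{k+1}(x)p_{k+2}(x)\ldots}$. Conversely, for any $y\in(0,1]$ with $Q$-expansion $\Delta^{Q}_{d_1d_2\ldots}$, the word $c_1\ldots c_k d_1d_2\ldots$ is $P$-admissible and its series sums to $S_k+(\rho/\pi)y$, so by uniqueness of the $P$-expansion this point belongs to $\Delta^P_{c_1\ldots c_k}$. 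Since every point of $(0,1]$ has a unique $Q$-expansion, $y$ exhausts $(0,1]$, hence $\Delta^P_{c_1\ldots c_k}=(S_k,\,S_k+\rho/\pi]$, a half-open interval of diameter $\rho/\pi$. This settles the $P$-cylinder claim and the $P$-part of \eqref{cyl}.

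The alternating case follows the same pattern. Splitting \eqref{AltPerronSer} at $n=k$ gives $x=T_k+(-1)^k(\rho/\pi)\,z$ with $T_k$ depending only on $c_1,\ldots,c_k$ and $z$ equal to the alternating $Q$-expansion $\Delta^{Q^-}_{q_{k+1}(x)q_{k+2}(x)\ldots}$, and now $z$ ranges over exactly those points of $(0,1]$ that possess a $Q^-$-expansion, i.e.\ over $(0,1]\setminus IS^{Q^-}$. The affine bijection $w\mapsto T_k+(-1)^k(\rho/\pi)w$ carries $Q^-$-cylinders onto the sub-cylinders of $\Delta^{P^-}_{c_1\ldots c_k}$; since the endpoint $w=1$ has no $Q^-$-expansion (every alternating Perron series is $<1$) and the endpoint $w\to0$ is excluded in any case, the image of $(0,1]\setminus IS^{Q^-}$ is an open interval of length $\rho/\pi$ with a countable set removed. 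Using the known description of $IS^{P^-}$ as exactly the infima and suprema of $P^-$-cylinders, together with the cylinder/sub-cylinder correspondence just noted, one sees that the removed set equals $IS^{P^-}$ inside the interval, so $\Delta^{P^-}_{c_1\ldots c_k}=(a,b)\setminus IS^{P^-}$ with $b-a=\rho/\pi$. Combined with the positive case, this completes \eqref{cyl}.

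I expect the main obstacle to be making the ``shift lemma'' behind the peeling step fully rigorous: checking that the tail of a positive (respectively, alternating) $P$-expansion really is a $Q$- (respectively, $Q^-$-) expansion, with numerators, denominators, and admissibility all matching, and, crucially for the converse direction, that prepending the fixed admissible word $c_1\ldots c_k$ to an arbitrary $Q$-admissible word does not alter the first $k$ digits --- this last point being exactly where uniqueness of the $P$-expansion is used. In the alternating case there is additional but routine bookkeeping: tracking which endpoint of the image interval is attained according to the parity of $k$, and matching the removed countable set with $IS^{P^-}$ via the available characterization of the latter; this is what makes the stated ``$\setminus IS^{P^-}$'' form come out on the nose.
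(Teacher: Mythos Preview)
The paper does not supply a proof of this proposition: it is quoted from \cite{Moroz2025,Moroz2024} as background, so there is no in-paper argument to compare against. Your peeling/shift-lemma approach is the natural one and is essentially how such cylinder formulas are derived in the cited sources: fixing the first $k$ digits and rewriting the tail as a Perron expansion for the shifted sequence $Q$ gives the affine parametrisation $x=S_k+(\rho/\pi)y$ (respectively $x=T_k+(-1)^k(\rho/\pi)z$), from which the interval shape and the diameter $\rho/\pi$ follow at once.

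The sketch is sound; the only places that need care when you write it out are the two you already flag. First, the converse direction in the positive case really does rest on uniqueness of the $P$-expansion: you must check that the series built from $c_1\ldots c_k d_1 d_2\ldots$ has its first $k$ $P$-digits equal to $c_1,\ldots,c_k$, which is exactly what uniqueness guarantees once you know the word is admissible. Second, in the alternating case, the identification of the removed countable set with $IS^{P^-}\cap(a,b)$ requires the observation that the affine map sends $Q^-$-cylinders bijectively to the $P^-$-subcylinders of $\Delta^{P^-}_{c_1\ldots c_k}$, so infima/suprema correspond; you should also note that the two endpoints $a,b$ themselves lie in $IS^{P^-}$ (they are the infimum and supremum of the cylinder itself), which is why removing $IS^{P^-}$ from the open interval $(a,b)$ gives exactly the image of $(0,1)\setminus IS^{Q^-}$. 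Your parenthetical justification that ``every alternating Perron series is $<1$'' is correct but could be sharpened by invoking directly that the range of the $Q^-$-representation is $(0,1)\setminus IS^{Q^-}$, which already excludes both endpoints.
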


For some sequences $P$, the positive and alternating Perron expansions reduce to well-known classical expansions:
\begin{itemize}
	\item the positive and alternating L\"utoth expansions, if $\varphi_n\equiv 1$ for all $n\in\mathbb{N}\cup\{0\}$:
	\begin{gather*}
		\frac{1}{p_1}+\frac{1}{(p_1-1)p_1p_2}+\frac{1}{(p_1-1)p_1(p_2-1)p_2 p_3}+\cdots, \\
		\frac{1}{q_1-1}-\frac{1}{(q_1-1)q_1(q_2-1)}+\frac{1}{(q_1-1)q_1(q_2-1)q_2(q_3-1)}-\cdots. 
	\end{gather*}
	\item the classical and alternating Engel expansions, if $\varphi_n(x_1,\ldots,x_n)=x_n-1$ for all $n\in\mathbb{N}$ with $\varphi_0=1$:
	\begin{gather}
		\frac{1}{p_1}+\frac{1}{p_1p_2}+\frac{1}{p_1 p_2 p_3}+\cdots,\label{ClassEngel} \\
		\frac{1}{q_1-1}-\frac{1}{q_1(q_2-1)}+\frac{1}{q_1 q_2(q_3-1)}-\cdots.\notag %\label{AltEngel} 
	\end{gather}
	\item the modified Engel and Pierce expansions, if $\varphi_n(x_1,\ldots,x_n)=x_n$ for all $n\in\mathbb{N}$ with $\varphi_0=1$:
	\begin{gather}
		\frac{1}{p_1}+\frac{1}{(p_1-1)p_2}+\frac{1}{(p_1-1)(p_2-1)p_3}+\cdots,  \label{ModEngel} \\ 
		\frac{1}{q_1-1}-\frac{1}{(q_1-1)(q_2-1)}+\frac{1}{(q_1-1)(q_2-1)(q_3-1)}-\cdots.\label{PiercePerronNotation}
	\end{gather}
	\item the Sylvester and second Ostrogradsky expansions, if $\varphi_n(x_1,\ldots,x_n)=x_n(x_n-1)$ for all $n\in\mathbb{N}$ with $\varphi_0=1$:
	\begin{gather*}
		\frac{1}{p_1}+\frac{1}{p_2}+\frac{1}{p_3}+\cdots, \label{Sylvester}\\
		\frac{1}{q_1-1}-\frac{1}{q_2-1}+\frac{1}{q_3-1}-\cdots. \label{SecondOstr}
	\end{gather*}
\end{itemize}

Note that the Pierce expansion in the Perron notation \eqref{PiercePerronNotation} is slightly different from its traditional form given by 
\begin{equation}\label{PierceTraditionalNotation}
	\frac{1}{\widetilde{q}_1}-\frac{1}{\widetilde{q}_1\widetilde{q}_2}+\cdots+\frac{(-1)^{n-1}}{\widetilde{q}_1\cdots\widetilde{q}_n}+\cdots.
\end{equation}
The digits $q_n$ of the Pierce expansion in the Perron notation \eqref{PiercePerronNotation} are greater by one than the corresponding digits $\widetilde{q}_n$ in the traditional notation \eqref{PierceTraditionalNotation}. The same remark applies to the second Ostrohradsky expansion.

Moreover, if $\varphi_0=1$ and $$\varphi_n(x_1,\ldots,x_n)=\varphi_n(x_n)=\frac{a_n(x_n)}{b_n(x_n)}\cdot x_n(x_n-1)\in\mathbb{N}$$ for all $n\in\mathbb{N}$, where $a_n\colon \mathbb{N}\to\mathbb{N}$ and $b_n\colon\mathbb{N}\to\mathbb{N}$, then the positive Perron expansion coincides with the restricted Oppenheim expansion:
\begin{gather*}
	\frac{1}{p_1}+\frac{a_1}{b_1}\cdot\frac{1}{p_2}+\cdots+\frac{a_1\cdots a_n}{b_1\cdots b_n}\cdot\frac{1}{p_{n+1}}+\cdots.\label{Oppenheim}
\end{gather*}

Almost all known expansions of real numbers can be represented as $I\text{-}F$-expansions introduced in \cite{GNT2017}. We briefly recall their construction. Let $(N_k)_{k=1}^\infty$ be a sequence of sets such that $N_k=\{0,1,\ldots,n_k\}$ for some $n_k\in\mathbb{N}$ or $N_k=\mathbb{N}\cup\{0\}$.

The unit interval (rank 0) is split into at most countable number consecutive intervals (cylinders) $\Delta^{I\text{-}F}_{c_1}$ of rank 1, where $c_1\in N_1$. Each interval of rank 1 is then split into at most countable number consecutive intervals $\Delta^{I\text{-}F}_{c_1 c_2}$ of rank 2, where $c_2\in N_2$. This process is continued inductively under the assumption that
\begin{equation*}
	\lim\limits_{n\to\infty}\left|\Delta^{I\text{-}F}_{c_1\ldots c_n}\right|=0
\end{equation*} 
for every sequence $(c_n)_{n=1}^\infty$ with $c_n\in N_n$ for all $n\in\mathbb{N}$. If there exists a sequence $(c_n(x))_{n=1}^\infty$ such that
\begin{equation*}
	x=\bigcap_{n=1}^\infty \Delta^{I\text{-}F}_{c_1(x)\ldots c_n(x)}:=\Delta^{I\text{-}F}_{c_1(x) c_2(x)\ldots},
\end{equation*} 
then the expression $\Delta^{I\text{-}F}_{c_1(x) c_2(x)\ldots}$ is
called the $I\text{-}F$-expansion of $x\in[0,1]$.

Here, $I\in [0,1]$ is a parameter whose $n$th binary digit determines the direction in which cylinders of rank $n-1$ are split into cylinders of rank $n$. If the $n$th binary digit of $I$ equals 1, the splitting proceeds from left to right; otherwise, it proceeds from right to left. If $I$ has two different binary expansions, then we choose the one containing the digit 1 in the period. 

In particular, the positive Perron expansions correspond to $I\text{-}F$-expansions with $I=0$, while the alternating Perron expansions correspond to $I\text{-}F$-expansions with
\begin{equation*}
	I=\frac{1}{2^2}+\frac{1}{2^4}+\frac{1}{2^6}+\cdots=\frac{1}{3}.
\end{equation*} 
Note that in the Perron expansions, the cylinder enumeration is shifted compared to the standard $I\text{-}F$-expansions; this shift does not affect the geometric structure of the cylinders, only their indices. Next, by studying the Perron expansions, we will use some general properties of the $I\text{-}F$-expansions.

\section{Faithful families of coverings generated by Perron expansions}

In this section, we recall the basic definitions of faithful families of coverings and present some auxiliary facts that will be used to prove the main results.

Calculating the Hausdorff dimension is often challenging due to the need to consider a broad class of covering sets. To overcome this difficulty, the notion of faithfulness was introduced in \cite{AILT2020,AKNT2016} (arXiv version of paper \cite{AILT2020} was published in 2013) and subsequently employed in \cite{LZh2017,LZh2016,SZhL2017,TV2025}. This notion allows one to work with narrower, yet technically convenient, classes of covering sets when calculating the Hausdorff dimension. We begin by recalling some basic and auxiliary definitions from \cite{AILT2020,AKNT2016}, incorporating slight generalizations for technical convenience.

\begin{definition}
	Let $\Phi$ be a family of subsets of $\Omega$, where $\Omega\subset[0,1]$. 
	The family $\Phi$ is called a \emph{fine family of coverings} on $\Omega$ if for every $\varepsilon>0$ there exists a countable (or finite) $\varepsilon$-covering $\{E_j\}$ of $\Omega$ with $E_j\in\Phi$.
\end{definition}

\begin{definition}
	Let $\Phi$ be a fine family of coverings on $\Omega$.  
	The \emph{Hausdorff $\alpha$-dimensional measure of a set $E\subset\Omega$ with respect to $\Phi$} is defined by
	$$H^\alpha(E,\Phi)=\lim_{\varepsilon\to 0}\left(\inf_{|E_j|\leq\varepsilon}\sum_{j}|E_j|^\alpha \right),$$
	where the infimum is taken over all countable (or finite) $\varepsilon$-coverings $\{E_j\}$ of $E$ with $E_j\in\Phi$. The \emph{Hausdorff dimension of $E$ with respect to $\Phi$} is defined as
	$$\dim_H(E,\Phi)=\inf\{\alpha\colon H^\alpha(E,\Phi)=0\}.$$
\end{definition}

\begin{definition}
	A fine family of coverings $\Phi$ is called a \emph{faithful family of coverings} for the Hausdorff dimension calculation on $\Omega$ if
	\[
	\dim_H(E,\Phi)=\dim_H(E)
	\]
	for every $E\subset\Omega$, where $\dim_H(E)$ denotes the classical Hausdorff dimension.
\end{definition}

It is well known that the family of all binary subintervals of $[0,1]$ is faithful for the Hausdorff dimension calculation on the unit interval. However, this does not hold for more specialized families: in general, neither the family of all $P$-cylinders nor the family of all $P^-$-cylinders is faithful. For example, in {\cite[Theorem~2.2, Corollary~2.8]{AKNT2016}} and {\cite[Theorem 2.2]{SZhL2017}}, the authors proved that the families of cylinders generated by the~positive L\"{u}roth expansion and the restricted Oppenheim expansion, both of which are particular cases of the positive Perron expansion, are not faithful. Consequently, it is common to supplement the family of all cylinders with certain specific unions of cylinders so that the resulting family becomes faithful for the~Hausdorff dimension calculation. This approach has been successfully applied to particular cases of Perron expansions, including the~Engel expansion~\cite{BHP2023}, the Pierce expansion \cite{BPT2013}, the~positive L\"{u}roth expansion \cite{ZhPr2012}, and the restricted Oppenheim expansion \cite{SZhL2017}.

Let $\hat{\Phi}(I\text{-}F)$ be the family of all unions of consecutive $I\text{-}F$-cylinders of the same rank within a single $I\text{-}F$-cylinder of the previous rank, and let $D(I\text{-}F)$ be the set of numbers in $[0,1]$ that have an $I\text{-}F$-expansion.

\begin{proposition}[{\cite[Theorem 2.1]{GNT2017}}]\label{GNTfaithfulness}
	If there exists a constant $\alpha>1$ such that 
	\begin{equation}\label{faithfulness}
		\frac{1}{\alpha}\leq \frac{\left|\Delta^{I\text{-}F}_{c_1\ldots c_n k} \right|}{\left|\Delta^{I\text{-}F}_{c_1\ldots c_n (k+1)} \right|}\leq \alpha
	\end{equation}
	for all $n\in\mathbb{N}$ and all $k$ such that $k, k+1 \in N_{n+1}$, then $\hat{\Phi}(I\text{-}F)$ is a faithful family of coverings for the Hausdorff dimension calculation on $D(I\text{-}F)$.
\end{proposition}

Let $\mathfrak{P}$ be the family of all unions of consecutive $P$-cylinders of the same rank within a single $P$-cylinder of the previous rank. That is, $\mathfrak{P}$ comprises all sets of the following forms:
\begin{align*}
	\bigcup_{\substack{i=n\\
			n\geq r_k+1}}^m \Delta^P_{c_1\ldots c_{k}i},\qquad\qquad\bigcup_{\substack{i=n\\
			n\geq r_k+1}}^\infty \Delta^P_{c_1\ldots c_{k}i},
\end{align*}
where $r_k=\varphi_k\left(c_1,\ldots,c_k\right)$. The family $\mathfrak{P^-}$ is defined analogously to $\mathfrak{P}$ as the family of all unions of consecutive $P^-$-cylinders of the same rank within a single $P^-$-cylinder of the previous rank.

\begin{theorem}\label{theoremdovirchistdim}
	The families $\mathfrak{P}$ and $\mathfrak{P}^-$ are faithful families of coverings for the Hausdorff dimension calculation on $(0,1]$ and $(0,1)\setminus IS^{P^-}$, respectively.
\end{theorem}

\begin{proof}
	Since the arguments for the families $\mathfrak{P}$ and $\mathfrak{P^-}$ are identical, we present the proof only for $\mathfrak{P}$.
	
	By Proposition \ref{GNTfaithfulness}, it suffices to verify that there exists a constant $\alpha>1$ such that
	\begin{equation}\label{faithfulnessP}
		\frac{1}{\alpha}\leq \frac{\left|\Delta^{P}_{c_1\ldots c_n k} \right|}{\left|\Delta^{P}_{c_1\ldots c_n (k+1)} \right|}\leq \alpha
	\end{equation}
	for all $n\in\mathbb{N}$, all admissible sequences $c_1,\ldots,c_n$, and all $k\geq \varphi_n(c_1,\ldots,c_n)+1$. From \eqref{cyl}, we obtain
	\begin{equation*}
		\frac{\left|\Delta^{P}_{c_1\ldots c_n k} \right|}{\left|\Delta^{P}_{c_1\ldots c_n (k+1)} \right|}=\frac{k-1}{k+1}.
	\end{equation*}
	Hence, inequality \eqref{faithfulnessP} holds with $\alpha=3$.
\end{proof}

\section{Main results: new fractal principles for Perron, Engel, and Pierce expansions}

In this section, we prove the main results of the paper: the fractal equivalence principle for Perron expansions and the fractal quasi-equivalence principles for Engel and Pierce expansions.

\subsection{Fractal equivalence principle for the positive and alternating Perron expansions}

For the positive and alternating Perron expansions defined by a sequence of functions $P=\{\varphi_n\}_{n=0}^\infty$, consider the function $\mathcal{F}_P\colon (0,1]\to(0,1)\setminus IS^{P^-}$, given by
\begin{equation*}
	\mathcal{F}_P(\Delta^{P}_{c_1 c_2\ldots})=\Delta^{P^-}_{c_1 c_2\ldots}.
\end{equation*}

The function $\mathcal{F}_P$ is interesting from several perspectives. For example, it was shown in \cite{Moroz2025} that $\mathcal{F}_P$ preserves the Lebesgue measure. Furthermore, we have a well-founded conjecture that $\mathcal{F}_P$ is nowhere monotonic, has jump discontinuities at points of the countable set $IS^{P^-}$, and is continuous elsewhere. Therefore, whether $\mathcal{F}_P$ preserves the Hausdorff dimension remains far from trivial. The differentiability of $\mathcal{F}_P$ at points of continuity is still an open problem. In this article, we do not answer these questions, as they are outside the main topic of the present investigation. However, we do not rule out the possibility of discussing these properties in detail in future articles.

\begin{theorem}\label{preserveTh1}
	The function $\mathcal{F}_P$ preserves the Hausdorff dimension on $(0,1]$, i.e., $$\dim_H\bigl(\mathcal{F}_P(E)\bigr)=\dim_H (E)$$ for every set $E\subset(0,1]$.
\end{theorem}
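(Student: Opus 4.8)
The plan is to exploit the two faithful families $\mathfrak{P}$ (on $(0,1]$) and $\mathfrak{P^-}$ (on $(0,1)\setminus IS^{P^-}$) established in Section~3, together with the fact that $\mathcal{F}_P$ is a digit-preserving bijection between these two domains. The key structural observation is that $\mathcal{F}_P$ maps $P$-cylinders to $P^-$-cylinders with the same base, and by the Proposition in Section~2 these have \emph{equal diameter}. Consequently $\mathcal{F}_P$ carries the family $\mathfrak{P}$ onto the family $\mathfrak{P^-}$ bijectively and diameter-preservingly: a union of consecutive $P$-cylinders $\bigcup_{i=n}^{m}\Delta^{P}_{c_1\ldots c_k i}$ (or the infinite analogue) is sent to $\bigcup_{i=n}^{m}\Delta^{P^-}_{c_1\ldots c_k i}$, and since the two base cylinders $\Delta^{P}_{c_1\ldots c_k}$ and $\Delta^{P^-}_{c_1\ldots c_k}$ have equal diameter, while the sub-cylinders are consecutive and exhaust the base in both cases, the corresponding unions have equal diameter as well. (Here one should note that a $P^-$-cylinder is of the form $(a,b)\setminus IS^{P^-}$, so "diameter" means $b-a$; the countable exceptional set does not affect diameters, and this is exactly why one works on $(0,1)\setminus IS^{P^-}$ rather than $(0,1)$.)

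Granting this, the proof proceeds as follows. First I would fix $E\subset(0,1]$ and $\alpha>0$. Given any $\varepsilon$-cover $\{M_j\}$ of $E$ by sets $M_j\in\mathfrak{P}$, the sets $\{\mathcal{F}_P(M_j)\}$ form an $\varepsilon$-cover of $\mathcal{F}_P(E)$ by sets in $\mathfrak{P^-}$ with $|\mathcal{F}_P(M_j)|=|M_j|$, and conversely. Taking infima over such covers yields $H^\alpha(\mathcal{F}_P(E),\mathfrak{P^-})=H^\alpha(E,\mathfrak{P})$ for every $\alpha>0$, hence $\dim_H(\mathcal{F}_P(E),\mathfrak{P^-})=\dim_H(E,\mathfrak{P})$. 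Second, I would invoke faithfulness: by Theorem~\ref{theoremdovirchistdim}, $\dim_H(E,\mathfrak{P})=\dim_H(E)$, and by Theorem~\ref{theoremdovirchistdimalt}, $\dim_H(\mathcal{F}_P(E),\mathfrak{P^-})=\dim_H(\mathcal{F}_P(E))$, since $\mathcal{F}_P(E)\subset(0,1)\setminus IS^{P^-}$. Chaining these three equalities gives $\dim_H(\mathcal{F}_P(E))=\dim_H(E)$, which is the claim.

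The main obstacle is not the dimension bookkeeping, which is routine once the covers are matched, but rather the careful verification that $\mathcal{F}_P$ genuinely induces a diameter-preserving bijection $\mathfrak{P}\to\mathfrak{P^-}$. One must check that $\mathcal{F}_P$ is well-defined and bijective from $(0,1]$ onto $(0,1)\setminus IS^{P^-}$ (this is essentially the uniqueness of $P$- and $P^-$-representations recalled in Section~2), that it sends each $P$-cylinder onto the $P^-$-cylinder with the same base, and — the one genuinely substantive point — that it preserves the \emph{order} of consecutive cylinders of a given rank within a parent cylinder, so that a union of \emph{consecutive} $P$-cylinders maps to a union of \emph{consecutive} $P^-$-cylinders (and not to some scattered union). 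For the positive expansion the $n$-th digit is non-increasing along the interval in one direction; for the alternating expansion the ordering alternates with parity, but within cylinders of a \emph{fixed} rank $k$ sharing the same parent of rank $k$, the sub-cylinders indexed by the last digit $i$ are still linearly ordered, and consecutiveness in the index $i$ corresponds to geometric adjacency in both cases. Once this combinatorial compatibility is spelled out, together with the equal-diameter Proposition, the equality $|\mathcal{F}_P(M)|=|M|$ for all $M\in\mathfrak{P}$ follows, and the rest of the argument is immediate. I would also remark, as the authors do, that this argument says nothing about continuity, monotonicity, or differentiability of $\mathcal{F}_P$ — it uses only the combinatorial-metric skeleton of the cylinder structure, which is precisely what makes the "fractal equivalence principle" clean.
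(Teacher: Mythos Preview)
Your proposal is correct and follows essentially the same approach as the paper's proof: both use the faithfulness of $\mathfrak{P}$ and $\mathfrak{P^-}$ together with the fact that $\mathcal{F}_P$ sends each $M\in\mathfrak{P}$ to an element of $\mathfrak{P^-}$ of the same diameter, so that $H^\alpha(E,\mathfrak{P})=H^\alpha(\mathcal{F}_P(E),\mathfrak{P^-})$ and hence $\dim_H(E)=\dim_H(\mathcal{F}_P(E))$. You are in fact more explicit than the paper about why unions of consecutive cylinders map to unions of consecutive cylinders with equal diameter; the paper states this conclusion directly from the equal-base-equal-diameter Proposition without further comment.
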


\begin{proof}
	Since 
	$$\mathcal{F}_P(\Delta^{P}_{c_1\ldots c_k})=\Delta^{P^-}_{c_1\ldots c_k}\qquad \text{and}\qquad|\Delta^{P^-}_{c_1\ldots c_k}|=|\Delta^{P}_{c_1\ldots c_k}|,$$
	it follows that for every $M\in\mathfrak{P}$ we have 
	$$\mathcal{F}_P(M)\in\mathfrak{P^-}\qquad\text{and}\qquad |\mathcal{F}_P(M)|=|M|.$$ 
	Let $\{M_j\}$ be a countable (or finite) cover of $E\subset(0,1]$ by sets from $\mathfrak{P}$. Then $\{\mathcal{F}_P(M_j)\}$ forms a cover of $\mathcal{F}_P(E)$ by sets from $\mathfrak{P^-}$, and 
	$$\sum_{j}|\mathcal{F}_P(M_j)|^\alpha=\sum_{j}|M_j|^\alpha, ~~\alpha\geq 0.$$ 
	Conversely, every cover of $\mathcal{F}_P(E)$ by sets from $\mathfrak{P^-}$ arises in this way. Consequently,
	\begin{gather*}
		H^\alpha (\mathcal{F}_P(E),\mathfrak{P^-})=H^\alpha (E,\mathfrak{P}),\\
		\dim_H(\mathcal{F}_P(E),\mathfrak{P^-})=\dim_H(E,\mathfrak{P}),
	\end{gather*}
	and hence
	\begin{equation*}
		\dim_H \mathcal{F}_P(E)=\dim_H E.\qedhere
	\end{equation*}
\end{proof}

Thus, we obtain the following principle.

\begin{principle}[Fractal equivalence principle for the Perron expansions.]
	Let the positive and alternating Perron expansions be determined by the same sequence $P$. Then, for every set $\mathfrak{M}\subset\mathbb{N}^\mathbb{N}$, 
	\begin{gather*}
		\dim_H\left\{x\in(0,1]\colon (p_n(x))_{n=1}^\infty\in \mathfrak{M}\right\}=\dim_H\left\{x\in(0,1)\setminus IS^{P^-}\colon (q_n(x))_{n=1}^\infty\in \mathfrak{M}\right\},
	\end{gather*}
	where $p_n(x)$ and $q_n(x)$ denote the $n$th digits of the positive and alternating Perron expansions of $x$, respectively.
\end{principle}

In particular, the fractal equivalence principle applies to pairs of expansions such as the positive and alternating L\"{u}roth expansions, the modified Engel and Pierce expansions, as well as the Sylvester and second Ostrogradsky expansions. In all such cases, the alternating expansions are considered in their Perron notations, which makes it possible to apply the above principle.

This principle can be interpreted as a manifestation of the fractal and metric phenomenon known as the $G$-isomorphism of $I\text{-}F$-expansions (see \cite{GNT2017}). The essence of $G$-isomorphism is that if the digit-preserving transformation of one $I\text{-}F$-expansion into another transforms some faithful covering family $K$ to a faithful family $K'$ and preserves diameters of sets from $K$, then such a transformation preserves both Lebesgue measure and Hausdorff dimension. In this case, such $I\text{-}F$-expansions are called $G$-isomorphic, which implies their fractal and metric equivalence.

The fractal and metric equivalence of the positive and alternating L\"{u}roth expansions was discussed in \cite{GNT2014}. In contrast, the fractal equivalence for the modified Engel and Pierce expansions, as well as for the Sylvester and second Ostrogradsky expansions, follows from the fractal equivalence principle for Perron expansions and, to the best of our knowledge, has not been previously discussed.

\subsection{Fractal quasi-equivalence principle for the classical and modified Engel expansions}

We now consider two cases of the positive Perron expansion: the classical and modified Engel expansions.

For the sequence $P=(\varphi_n)_{n=0}^\infty$ given by $$\varphi_0=1\qquad \text{and}\qquad \varphi_n(x_1,\ldots,x_n)=x_n-1,$$ the positive Perron expansion reduces to the classical Engel expansion ($E$-expansion). The diameter of an $E$-cylinder $\Delta^{E}_{c_1 \ldots c_k}$ equals
\begin{gather*}
	|\Delta^{E}_{c_1 \ldots c_k}|=\frac{1}{c_1\cdots c_{k-1}c_k(c_k-1)}.
\end{gather*}
In this case, for every $x\in(0,1]$, the $E$-digit sequence $(p_n(x))_{n=1}^\infty$ is non-decreasing and satisfies $p_1(x)\geq 2$. Moreover, every non-decreasing sequence $(c_n)_{n=1}^\infty$ of natural numbers with $c_1\geq 2$ can be realized as the $E$-digit sequence of some $x\in(0,1]$. For the classical Engel expansion, we denote the faithful family $\mathfrak{P}$ by $\mathfrak{P}_E$.

For the sequence $P=(\varphi_n)_{n=0}^\infty$ given by $$\varphi_0=1,\qquad \text{and}\qquad\varphi_n(x_1,\ldots,x_n)=x_n,$$ 
the positive Perron expansion reduces to the modified Engel expansion ($E_\mathrm{mod}$-expansion). The diameter of an~$E_\mathrm{mod}$-cylinder $\Delta^{E_\mathrm{mod}}_{c_1 \ldots c_k}$ equals
\begin{gather*}
	|\Delta^{E_\mathrm{mod}}_{c_1 \ldots c_k}|=\frac{1}{(c_1-1)\cdots(c_{k-1}-1)(c_k-1)c_k}.
\end{gather*}
In this case, for every $x\in(0,1]$, the $E_\mathrm{mod}$-digit sequence $(p'_n(x))_{n=1}^\infty$ is strictly increasing with $p'_1(x)\geq 2$. Similarly, every strictly increasing sequence $(c'_n)_{n=1}^\infty$ of natural numbers with $c'_1\geq 2$ can be realized as the~$E_\mathrm{mod}$-digit sequence of some $x\in(0,1]$. For the modified Engel expansion, we denote the faithful family $\mathfrak{P}$ by $\mathfrak{P}_{E_\mathrm{mod}}$.

Let $x=\Delta^{E}_{c_1 c_2\ldots }$. Consider the function $\mathcal{T}\colon (0,1]\to (0,1]$ defined by
$$\mathcal{T}(x)=\mathcal{T}(\Delta^{E}_{c_1 c_2\ldots})=\Delta^{E_\mathrm{mod}}_{c'_1 c'_2\ldots},$$
where $c'_n=c_n+n-1$ for all $n\in\mathbb{N}$. 

Basic properties of similar functions were studied in \cite{Moroz2024}. In fact, $\mathcal{T}$ is a projection between $\overline{P}$-representations, i.e., the difference-based forms of the corresponding positive Perron expansions. This can be verified by expressing both $\Delta^{E}_{c_1 c_2 \ldots}$ and $\Delta^{E_{\mathrm{mod}}}_{c'_1 c'_2 \ldots}$ in their difference-based forms (see \cite{Moroz2024}). The function $\mathcal{T}$ is continuous and strictly increasing (see \cite[Lemma 6, Theorem 3]{Moroz2024}).

From its definition, $\mathcal{T}$ satisfies:
\begin{itemize}
	\item if $\mathcal{T}(x)=x'$, then $p'_n(x')=p_n(x)+n-1$ for all $n\in\mathbb{N}$;
	\item for every $y'\in(0,1]$, there exists a unique  $y\in(0,1]$ such that $y'=\mathcal{T}(y);$
	\item $\mathcal{T}(\Delta^{E}_{c_1\ldots c_k})=\Delta^{E_\mathrm{mod}}_{c'_1\ldots c'_k}$;
	\item if $U\in\mathfrak{P}_E$, then $\mathcal{T}(U)\in\mathfrak{P}_{E_\mathrm{mod}}$;
	\item for every $U'\in\mathfrak{P}_{E_\mathrm{mod}}$, there exists $U\in\mathfrak{P}_{E}$ such that $U'=\mathcal{T}(U)$.
\end{itemize}

Since $\mathcal{T}$ modifies the digits of an expansion, a set defined by some property of $(p_n(x))_{n=1}^\infty$ in the classical Engel expansion will generally not correspond to a set with the same property in the modified Engel expansion. In fact,
$$\mathcal{T}\left(\left\{x\in(0,1]\colon (p_n(x))_{n=1}^\infty\in\mathfrak{M}\right\}\right)=\left\{x\in(0,1]\colon (p'_n(x)-n+1)_{n=1}^\infty\in\mathfrak{M}\right\}.$$

\begin{lemma}\label{LemmaLipschitz}
	The function $\mathcal{T}$ is a Lipschitz transformation.
\end{lemma}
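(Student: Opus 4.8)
The plan is to show that $\mathcal{T}$ is Lipschitz by exhibiting an explicit bound on how much $\mathcal{T}$ can stretch intervals, using the faithful families $\mathfrak{P}_E$ and $\mathfrak{P}_{E_\mathrm{mod}}$ together with the diameter formulas for cylinders. Since $\mathcal{T}$ is continuous and strictly increasing, it suffices to bound $|\mathcal{T}(U)|/|U|$ for a generating class of intervals. The natural class is the $E$-cylinders, for which $\mathcal{T}(\Delta^{E}_{c_1\ldots c_k})=\Delta^{E_\mathrm{mod}}_{c'_1\ldots c'_k}$ with $c'_n=c_n+n-1$. Comparing the two diameter formulas gives
\[
\frac{|\mathcal{T}(\Delta^{E}_{c_1\ldots c_k})|}{|\Delta^{E}_{c_1\ldots c_k}|}=\frac{c_1\cdots c_{k-1}\,c_k(c_k-1)}{(c'_1-1)\cdots(c'_{k-1}-1)(c'_k-1)c'_k}=\prod_{n=1}^{k-1}\frac{c_n}{c'_n-1}\cdot\frac{c_k(c_k-1)}{c'_k(c'_k-1)}.
\]
The first step is to verify that each factor $\frac{c_n}{c'_n-1}=\frac{c_n}{c_n+n-2}$ is at most $1$ once $n\geq 2$ (since then $c_n+n-2\geq c_n$), and that the $n=1$ factor is $\frac{c_1}{c_1-1}\leq 2$ because $c_1\geq 2$; the last factor is also $\leq 1$. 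Hence $|\mathcal{T}(\Delta^{E}_{c_1\ldots c_k})|\leq 2\,|\Delta^{E}_{c_1\ldots c_k}|$ for every $E$-cylinder.

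Next I would promote this cylinder estimate to a genuine Lipschitz bound on all of $(0,1]$. The clean way is to take an arbitrary interval $(x_1,x_2]\subset(0,1]$ and apply Theorem~\ref{lemmadovirchist}: cover it by at most three sets $M_1,M_2,M_3\in\mathfrak{P}_E$, each of diameter at most $|x_2-x_1|$. Since $\mathcal{T}$ is increasing and maps each $M_j\in\mathfrak{P}_E$ to $\mathcal{T}(M_j)\in\mathfrak{P}_{E_\mathrm{mod}}$, the image $\mathcal{T}((x_1,x_2])$ is covered by $\mathcal{T}(M_1),\mathcal{T}(M_2),\mathcal{T}(M_3)$. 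A set in $\mathfrak{P}_E$ is a union of consecutive $E$-cylinders of equal rank inside one $E$-cylinder of the previous rank; I would check that the cylinder-wise bound $|\mathcal{T}(\Delta^{E}_{\ldots})|\leq 2|\Delta^{E}_{\ldots}|$ extends to such unions, i.e.\ $|\mathcal{T}(M)|\leq C|M|$ for $M\in\mathfrak{P}_E$ with an absolute constant $C$. This is where one must be slightly careful: one should compare $|\mathcal{T}(M)|$ with $|M|$ directly rather than summing cylinder diameters, but since $\mathcal{T}$ is monotone and the cylinders in $M$ are consecutive, $\mathcal{T}(M)$ is again a union of consecutive cylinders, and the ratio $|\mathcal{T}(M)|/|M|$ is controlled by the same product expression evaluated at the common prefix $c_1\ldots c_k$ — giving the same constant (essentially $C=2$, or $C=4$ if one wants slack). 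Combining, $|\mathcal{T}((x_1,x_2])|\leq 3C\,|x_2-x_1|$, and by monotonicity and continuity this yields $|\mathcal{T}(x)-\mathcal{T}(y)|\leq 3C\,|x-y|$ for all $x,y$, so $\mathcal{T}$ is Lipschitz with constant $3C$.

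The main obstacle is the second step: turning the per-cylinder comparison into a uniform estimate valid for unions in $\mathfrak{P}_E$ and hence for arbitrary intervals, without the Lipschitz constant blowing up. One has to make sure that when a union of consecutive cylinders $\bigcup_i \Delta^E_{c_1\ldots c_k i}$ is mapped forward, the gaps between the images do not accumulate — but this is automatic because $\mathcal{T}$ is a strictly increasing bijection of $(0,1]$, so order and adjacency of cylinders are preserved and $\mathcal{T}(M)$ is genuinely an interval (minus possibly countably many points of $IS$) with the two endpoints being images of the endpoints of $M$. Then $|\mathcal{T}(M)|$ is sandwiched between the diameters of the extreme image cylinders and $|\mathcal{T}(\Delta^E_{c_1\ldots c_k})|$, all of which carry the same factor $\prod_{n=1}^{k-1}\frac{c_n}{c'_n-1}\le \frac{c_1}{c_1-1}\le 2$ times the corresponding $E$-side diameter. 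A secondary, purely bookkeeping point is to handle the $n=1$ factor: it is the only one exceeding $1$, and it is bounded by $2$ precisely because $c_1\geq 2$; for cylinders with $c_1$ large it is close to $1$, so no additional care beyond the crude bound $2$ is needed. Once these points are checked, the Lipschitz property — indeed with a small explicit constant — follows, and in fact the same computation shows $\mathcal{T}^{-1}$ need not be Lipschitz, which is consistent with the ``quasi-equivalence'' terminology and explains why one only gets inequalities (not equalities) for Hausdorff dimension under $\mathcal{T}$.
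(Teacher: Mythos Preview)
Your proposal is correct and shares with the paper the central computation: the per-cylinder ratio
\[
\frac{|\Delta^{E_\mathrm{mod}}_{c'_1\ldots c'_k}|}{|\Delta^{E}_{c_1\ldots c_k}|}=\frac{c_1}{c_1-1}\cdot\frac{c_2}{c_2}\cdot\frac{c_3}{c_3+1}\cdots\frac{c_k}{c_k+k-2}\cdot\frac{c_k-1}{c_k+k-1}<2.
\]
The difference lies in how this is promoted to a Lipschitz bound on all of $(0,1]$. You invoke Theorem~\ref{lemmadovirchist} to cover an arbitrary interval by three sets from $\mathfrak{P}_E$, then argue $|\mathcal{T}(M)|\leq C|M|$ for each $M\in\mathfrak{P}_E$, obtaining a Lipschitz constant $3C$. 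The paper instead observes that any open interval $(a,b)$ decomposes as a countable \emph{disjoint} union of $E$-cylinders (of varying ranks); since $\mathcal{T}$ is a monotone bijection, the image $(\mathcal{T}(a),\mathcal{T}(b))$ is the disjoint union of the image cylinders, and summing diameters on both sides gives $\mathcal{T}(b)-\mathcal{T}(a)\leq 2(b-a)$ directly.

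So the paper's route is shorter, avoids the covering theorem altogether, and yields the sharper constant $2$. Your remark that ``one should compare $|\mathcal{T}(M)|$ with $|M|$ directly rather than summing cylinder diameters'' is actually unwarranted: summing works perfectly well (the cylinders are disjoint and their union is an interval, so diameters add up to the length), and this is precisely what the paper exploits. Your approach still goes through because the extra factor you pick up when passing to unions of same-rank cylinders is at most $1$, but you are working harder than necessary.
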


\begin{proof}
	Since any open interval $(a,b)\subseteq(0,1)$ can be represented as a countable (or finite) union $\bigcup \Delta^{E}_{c_1\ldots c_k}$ of pairwise disjoint $E$-cylinders, the length of this interval is given by $\sum\left|\Delta^{E}_{c_1\ldots c_k}\right|$.
	Moreover, the interval $\mathcal{T}((a,b))=(\mathcal{T}(a),\mathcal{T}(b))$ can be represented as a union of pairwise disjoint $E_\text{mod}$-cylinders, 
	$$\bigcup \Delta^{E_\mathrm{mod}}_{c'_1 \ldots c'_k}=\bigcup \mathcal{T}\left(\Delta^{E}_{c_1 \ldots c_k}\right),$$
	so that $$\mathcal{T}(b)-\mathcal{T}(a)=\sum\left|\mathcal{T}\left(\Delta^{E}_{c_1 \ldots c_k}\right)\right|=\sum\left|\Delta^{E_\mathrm{mod}}_{c'_1 \ldots c'_k}\right|.$$
	It remains to show the existence of a constant $M$ such that for any finite non-decreasing sequence $(c_n)_{n=1}^k$,
	\begin{gather}\label{M}
		\frac{|\mathcal{T}\left(\Delta^{E}_{c_1 \ldots c_k}\right)|}{|\Delta^{E}_{c_1 \ldots c_k}|}=\frac{|\Delta^{E_\mathrm{mod}}_{c'_1 \ldots c'_k}|}{|\Delta^{E}_{c_1 \ldots c_k}|}<M.
	\end{gather}
	Since $c_n\geq 2$, we have
	\begin{gather*}
		\frac{|\Delta^{E_\mathrm{mod}}_{c'_1 \ldots c'_k}|}{|\Delta^{E}_{c_1 \ldots c_k}|}=\frac{c_1\cdots c_k(c_k-1)}{(c'_1-1)\cdots(c'_k-1)c'_k}=\frac{c_1}{c_1-1}\cdot\ldots\cdot\frac{c_k}{c_k+k-2}\cdot \frac{c_k-1}{c_k+k-1}<2.
	\end{gather*}
	Thus, inequality \eqref{M} holds with $M=2$. This proves the lemma.
\end{proof}

\begin{corollary}\label{ineqdim}
	For every set $E\subseteq(0,1]$, we have
	$$\dim_H \mathcal{T}(E)\leq\dim_H E.$$
\end{corollary}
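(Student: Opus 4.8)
The plan is to deduce Corollary~\ref{ineqdim} directly from Lemma~\ref{LemmaLipschitz} together with the faithfulness results of Section~3. First I would recall the elementary fact that Lipschitz maps do not increase Hausdorff dimension: if $f\colon X\to Y$ satisfies $|f(a)-f(b)|\leq L|a-b|$ for all $a,b\in X$, then for any cover $\{V_i\}$ of $E\subseteq X$ one has that $\{f(V_i)\}$ covers $f(E)$ with $|f(V_i)|\leq L|V_i|$, hence $\sum_i|f(V_i)|^\alpha\leq L^\alpha\sum_i|V_i|^\alpha$; taking infima and then the limit $\varepsilon\to0$ gives $H^\alpha(f(E))\leq L^\alpha H^\alpha(E)$, so $H^\alpha(E)=0\Rightarrow H^\alpha(f(E))=0$, and therefore $\dim_H f(E)\leq\dim_H E$. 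Applying this with $f=\mathcal{T}$ and $L=2$ (the constant furnished by Lemma~\ref{LemmaLipschitz}) immediately yields the claim.

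Alternatively — and this is probably the cleaner route given the machinery already set up — I would argue through the faithful families $\mathfrak{P}_E$ and $\mathfrak{P}_{E_\mathrm{mod}}$. Given a countable $\varepsilon$-cover $\{U_j\}$ of $E$ by sets from $\mathfrak{P}_E$, the listed properties of $\mathcal{T}$ guarantee $\mathcal{T}(U_j)\in\mathfrak{P}_{E_\mathrm{mod}}$, while the bound \eqref{M} gives $|\mathcal{T}(U_j)|<2|U_j|\leq 2\varepsilon$; thus $\{\mathcal{T}(U_j)\}$ is a $2\varepsilon$-cover of $\mathcal{T}(E)$ by sets from $\mathfrak{P}_{E_\mathrm{mod}}$ with $\sum_j|\mathcal{T}(U_j)|^\alpha\leq 2^\alpha\sum_j|U_j|^\alpha$. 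Passing to infima and letting $\varepsilon\to0$ yields $H^\alpha(\mathcal{T}(E),\mathfrak{P}_{E_\mathrm{mod}})\leq 2^\alpha H^\alpha(E,\mathfrak{P}_E)$, so whenever the right side vanishes so does the left. By Theorem~\ref{theoremdovirchistdim} applied to both expansions, $\dim_H(\cdot,\mathfrak{P}_E)=\dim_H(\cdot)$ and $\dim_H(\cdot,\mathfrak{P}_{E_\mathrm{mod}})=\dim_H(\cdot)$, and the inequality $\dim_H\mathcal{T}(E)\leq\dim_H E$ follows.

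There is really no serious obstacle here: all the substantive work — the distortion bound \eqref{M}, the stability of the families $\mathfrak{P}_E,\mathfrak{P}_{E_\mathrm{mod}}$ under $\mathcal{T}$, and the faithfulness theorems — has already been done. The only point requiring a modicum of care is that $\mathcal{T}$ is defined on all of $(0,1]$ (every $x$ has an $E$-representation) and maps onto $(0,1]$, so no sets of measure-theoretically awkward exceptional points intrude, in contrast with the alternating case; this is why the statement is a clean inequality on arbitrary $E\subseteq(0,1]$. I would therefore present the proof in two lines, simply citing Lemma~\ref{LemmaLipschitz} and the standard behavior of Hausdorff dimension under Lipschitz maps. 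One should note that only the inequality, not equality, is claimed — the reverse bound fails in general because $\mathcal{T}^{-1}$ need not be Lipschitz, which is precisely what forces the weaker notion of \emph{quasi}-equivalence in the sequel.
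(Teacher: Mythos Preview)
Your proposal is correct and matches the paper's approach: the corollary is stated immediately after Lemma~\ref{LemmaLipschitz} without proof, so the intended argument is precisely the standard fact that Lipschitz maps do not increase Hausdorff dimension, which is your first route. Your alternative via the faithful families $\mathfrak{P}_E$ and $\mathfrak{P}_{E_{\mathrm{mod}}}$ is also valid but unnecessary here, since the global Lipschitz bound already handles arbitrary covers directly.
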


However, $\mathcal{T}$ is not bi-Lipschitz since there is no positive constant $m$ such that $$\frac{|\Delta^{E_\mathrm{mod}}_{c'_1 \ldots c'_k}|}{|\Delta^{E}_{c_1 \ldots c_k}|}>m.$$
Indeed, if $c_1=\cdots=c_k=2$, then this ratio equals $\frac{2^k}{(k+1)!}$, which tends to zero as $k\to\infty$. Below, we state sufficient conditions ensuring that the transformation $\mathcal{T}$ preserves the Hausdorff dimension of the set $E$.

For a positive function $\psi\colon \mathbb{N}\to\mathbb{R}^+$, we define the set $$\mathfrak{A}_\psi=\left\{x\in(0,1]\colon p_n(x)\geq\psi(n)\text{ ~for all sufficiently large } n\right\}.$$

\begin{theorem}\label{preserveTh2}
	If $\sum_{n=1}^{\infty}\frac{n}{\psi(n)}<\infty$, then for every set $E\subset\mathfrak{A}_\psi$, we have $$\dim_H \mathcal{T}(E)=\dim_H E.$$
\end{theorem}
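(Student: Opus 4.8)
The plan is to establish the reverse inequality $\dim_H \mathcal{T}(E)\geq\dim_H E$, since Corollary~\ref{ineqdim} already gives the other direction. By the faithfulness of $\mathfrak{P}_E$ and $\mathfrak{P}_{E_\mathrm{mod}}$ (Theorem~\ref{theoremdovirchistdim}), it suffices to compare Hausdorff measures with respect to these families. Given an $\varepsilon$-cover $\{U_j'\}$ of $\mathcal{T}(E)$ by sets from $\mathfrak{P}_{E_\mathrm{mod}}$, pull it back via $\mathcal{T}$ to a cover $\{U_j\}$ of $E$ with $U_j\in\mathfrak{P}_E$ and $\mathcal{T}(U_j)=U_j'$. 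The idea is that, since $E\subset\mathfrak{A}_\psi$, we may assume each $U_j$ is a (union of) cylinder(s) $\Delta^E_{c_1\ldots c_k}$ (or a union over a final digit) with $c_i\geq\psi(i)$ for all $i$ large; then the distortion ratio $|U_j|/|U_j'| = |\Delta^E_{c_1\ldots c_k}|/|\Delta^{E_\mathrm{mod}}_{c'_1\ldots c'_k}|$, which from the computation in Lemma~\ref{LemmaLipschitz} equals
\begin{gather*}
\frac{c_1-1}{c_1}\cdot\frac{c_2}{c_2}\cdot\frac{c_3+1}{c_3}\cdots\frac{c_k+k-2}{c_k}\cdot\frac{c_k+k-1}{c_k-1},
\end{gather*}
can be bounded \emph{uniformly} because $c_i\geq\psi(i)$ and $\sum n/\psi(n)<\infty$: writing each factor as $1+\frac{i-1}{c_i}\leq 1+\frac{i-1}{\psi(i)}$ (for $i\geq 2$, after absorbing the harmless first and last factors), the product is at most $\prod_{i\geq 2}(1+\frac{i-1}{\psi(i)})$, which converges since $\sum \frac{i-1}{\psi(i)}<\infty$.

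The key point to make rigorous is the reduction to cylinders with large digits. First, only finitely many $x\in E$ can fail the condition ``$p_n(x)\geq\psi(n)$ for all $n\geq N$'' for a \emph{fixed} threshold $N$; but $N$ depends on $x$, so $E$ decomposes as a countable union $E=\bigcup_N E_N$ where $E_N=\{x\in E: p_n(x)\geq\psi(n)\text{ for all }n\geq N\}$. Since Hausdorff dimension is countably stable, it suffices to prove $\dim_H\mathcal{T}(E_N)\geq\dim_H E_N$ for each fixed $N$. For $E_N$, split each cover element further: refine any $U_j\in\mathfrak{P}_E$ of rank $\leq N$ into its rank-$N$ subcylinders intersecting $E_N$, of which there are only finitely many and which contribute a bounded multiplicative constant $C_N$ to the $\alpha$-sum (for each fixed $\alpha$ in a neighborhood of $\dim_H E_N$, using $|\cdot|\leq 1$). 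The first $N$ digits then contribute only the bounded factor $C_N$, while the tail digits $c_i$ ($i>N$) satisfy $c_i\geq\psi(i)$ and give the convergent-product bound above. Hence there is a constant $K_N$ with $\sum_j|U_j|^\alpha\leq K_N\sum_j|U_j'|^\alpha$, so $H^\alpha(E_N,\mathfrak{P}_E)\leq K_N\, H^\alpha(\mathcal{T}(E_N),\mathfrak{P}_{E_\mathrm{mod}})$, which forces $\dim_H E_N\leq\dim_H\mathcal{T}(E_N)$.

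The main obstacle I anticipate is handling the cover elements of $\mathfrak{P}_E$ that are genuine unions of cylinders (the sets $\bigcup_{i=n}^m\Delta^E_{c_1\ldots c_k i}$ or the infinite analogue) rather than single cylinders: for these one must check that $\mathcal{T}$ maps them to the corresponding union in $\mathfrak{P}_{E_\mathrm{mod}}$ (true, by the cylinder identity $\mathcal{T}(\Delta^E_{c_1\ldots c_k})=\Delta^{E_\mathrm{mod}}_{c'_1\ldots c'_k}$ and monotonicity of $\mathcal{T}$) and that the distortion estimate survives: the diameter of a union $\bigcup_{i\geq n}\Delta^E_{c_1\ldots c_k i}$ equals $|\Delta^E_{c_1\ldots c_{k-1}}|\cdot\frac{1}{c_k}$ up to the structure of the Engel expansion, and the ratio to its $\mathcal{T}$-image telescopes through the same product of factors $1+\frac{i-1}{c_i}$ for $i\leq k$, so the same uniform bound applies. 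Making this last telescoping precise — i.e. that the ratio of diameters of a union equals the ratio of diameters of the ``parent data'' $c_1,\ldots,c_k$ — is the one spot requiring care, but it follows directly from formula~\eqref{cyl} and the explicit diameter formulas for $E$- and $E_\mathrm{mod}$-cylinders.
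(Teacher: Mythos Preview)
Your overall approach---decomposing $E=\bigcup_N E_N$ with $E_N=\{x\in E: p_n(x)\geq\psi(n)\text{ for all }n\geq N\}$, invoking the faithfulness of $\mathfrak{P}_E$ and $\mathfrak{P}_{E_\mathrm{mod}}$, and bounding the distortion ratio $|\Delta^E_{c_1\ldots c_k}|/|\Delta^{E_\mathrm{mod}}_{c'_1\ldots c'_k}|$ by the convergent product $\prod_i(1+\tfrac{i-1}{\psi(i)})$---is exactly what the paper does, and your remarks on unions of cylinders are correct.

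There is one genuine gap: your treatment of low-rank cover elements. You propose to refine any $U_j\in\mathfrak{P}_E$ of rank $<N$ into its rank-$N$ subcylinders intersecting $E_N$, ``of which there are only finitely many.'' This is false. A rank-$k$ $E$-cylinder $\Delta^E_{c_1\ldots c_k}$ with $k<N$ has infinitely many rank-$N$ subcylinders, and requiring intersection with $E_N$ only forces $c_n\geq\psi(n)$ for $n\geq N$; the intermediate digits $c_{k+1},\ldots,c_{N-1}$ are constrained only by monotonicity and may be arbitrarily large, so infinitely many subcylinders remain. Hence refinement does not yield a bounded $\alpha$-sum contribution in the way you describe. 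The fix is simpler than refinement and is what the paper actually does: for a cylinder of rank $n<N$, the distortion product
\[
\frac{c_1-1}{c_1}\cdot\frac{c_2}{c_2}\cdots\frac{c_n+n-2}{c_n}\cdot\frac{c_n+n-1}{c_n-1}
\]
has only $n<N$ factors, each of which can be crudely bounded using $c_i\geq 2$ (e.g.\ $\frac{c_i+i-2}{c_i}\leq\frac{i}{2}$ and $\frac{c_n+n-1}{c_n-1}\leq n+1$), giving a bound depending only on $N$---no refinement is needed. With this correction your argument matches the paper's.
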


\begin{proof}
	For each $k\in\mathbb{N}$, define the set $\mathfrak{A}_\psi^k$ by $$\mathfrak{A}_\psi^k=\left\{x\in(0,1]\colon p_n(x)\geq\psi(n)\text{ for all }n\geq k\right\},$$ 
	and $E^k=E\cap \mathfrak{A}_\psi^k$. Then $$\mathfrak{A}_\psi=\bigcup_{k=1}^\infty \mathfrak{A}_\psi^k,~~~~~E=\bigcup_{k=1}^\infty E^k.$$
	
	Consider an at most countable cover $\left\{U_i\right\}$ of $E^k$ by sets from $\mathfrak{P}_E$. If $U_i$ consists of $E$-cylinders of rank $n<k$ and $\Delta^{E}_{c_1 \ldots c_n}\subset U_i$, then we have
	\begin{align*}
		\frac{|\mathcal{T}(\Delta^{E}_{c_1 \ldots c_n})|}{|\Delta^{E}_{c_1 \ldots c_n}|}&=\frac{|\Delta^{E_\mathrm{mod}}_{c'_1 \ldots c'_n}|}{|\Delta^{E}_{c_1 \ldots c_n}|}=\frac{c_1}{c_1-1}\cdot\ldots\cdot\frac{c_n}{c_n+n-2}\cdot \frac{c_n-1}{c_n+n-1}\\
		&>1\cdot\frac{2}{2}\cdot\frac{2}{3}\cdot\ldots\cdot\frac{2}{n}\cdot\frac{1}{n+1}=\frac{2^{n-1}}{(n+1)!}\geq\frac{1}{k!},
	\end{align*}
	and hence
	$$\frac{|\mathcal{T}(U_i)|}{|U_i|}>\frac{1}{k!}.$$
	If $U_i$ consists of $E$-cylinders of rank $n\geq k$ and $\Delta^{E}_{c_1\ldots c_{n}}\subset U_i$, without loss of generality, assume that $c_m\geq\psi(m)$ for all $m$ with $k\leq m\leq n$ (otherwise such cylinders do not intersect $E^k$ and can be excluded). Then
	\allowdisplaybreaks
	\begin{align*}
		\frac{|\mathcal{T}(\Delta^{E}_{c_1 \ldots c_n})|}{|\Delta^{E}_{c_1 \ldots c_n}|}&=\frac{|\Delta^{E_\mathrm{mod}}_{c'_1 \ldots c'_n}|}{|\Delta^{E}_{c_1 \ldots c_n}|}=\frac{c_1}{c_1-1}\cdot\ldots\cdot\frac{c_{k-1}}{c_{k-1}+k-3}\cdot\frac{c_k}{c_k+k-2}\cdot\ldots\cdot\frac{c_n}{c_n+n-2}\cdot \frac{c_n-1}{c_n+n-1}\\
		&\geq1\cdot \frac{2}{2}\cdot\frac{2}{3}\cdot\ldots\cdot\frac{2}{k-1}\cdot\frac{\psi(k)}{\psi(k)+k-2}\cdot\ldots\cdot\frac{\psi(n)}{\psi(n)+n-2}\cdot \frac{p_n}{2(p_n+n)}\\
		&>\frac{2^{k-2}}{(k-1)!}\cdot\frac{\psi(k)}{\psi(k)+k}\cdot\ldots\cdot\frac{\psi(n)}{\psi(n)+n}\cdot\frac{\psi(n)}{2(\psi(n)+n)}\\
		&>\frac{2^{k-3}}{(k-1)!}\cdot\left(\prod_{j=k}^{\infty}\left(1+\frac{j}{\psi(j)}\right)\right)^{-1}\cdot\min_{n\in\mathbb{N}}\left\{\frac{\psi(n)}{\psi(n)+n}\right\}.
	\end{align*}
	The condition $\sum_{n=1}^{\infty}\frac{n}{\psi(n)}<\infty$ implies 
	$$0<\prod_{j=k}^{\infty}\left(1+\frac{j}{\psi(j)}\right)<\infty.$$
	Since $0<\frac{\psi(n)}{\psi(n)+n}<1$ and $\frac{\psi(n)}{\psi(n)+n}\to 1$ as $n\to\infty$, it follows that the minimum $\min_{n\in\mathbb{N}}\left\{\frac{\psi(n)}{\psi(n)+n}\right\}$ exists and is strictly positive. Hence, in both cases, ratios $$\frac{|\mathcal{T}(\Delta^{E}_{c_1 \ldots c_n})|}{|\Delta^{E}_{c_1 \ldots c_n}|}\qquad\text{and}\qquad\frac{|\mathcal{T}(U_i)|}{|U_i|}$$
	are bounded from below by a positive constant $m_k$ that does not depend on $n$. 
	
	Therefore,
	$$m_k^\alpha\cdot H^\alpha\left(E^k,\mathfrak{P}_{E}\right)<H^\alpha\left(\mathcal{T}\left(E^k\right),\mathfrak{P}_{E_\mathrm{mod}}\right)<2^\alpha H^\alpha\left(E^k,\mathfrak{P}_{E}\right).$$
	From these bounds, we deduce
	$$\dim_H\left(\mathcal{T}(E^k),\mathfrak{P}_{E_\mathrm{mod}}\right)=\dim_H\left(E^k,\mathfrak{P}_{E}\right)$$
	and
	$$\dim_H \mathcal{T}(E^k)=\dim_H E^k.$$
	Since $E=\bigcup_{k=1}^\infty E^k$, it follows that
	\begin{equation*}
		\dim_H \mathcal{T}(E) =\sup\left\{\dim_H \mathcal{T}(E^k)\right\}=\sup\left\{\dim_H E^k\right\}=\dim_H E.\qedhere
	\end{equation*}
\end{proof}

Thus, we obtain the following principle.

\begin{principle}[Fractal quasi-equivalence principle for Engel expansions.]
	Let $\psi\colon\mathbb{N}\to\mathbb{R}^+$ be a positive function satisfying $\sum_{n=1}^{\infty}\frac{n}{\psi(n)}<\infty$, and let $\mathfrak{M}$ be a subset of $\mathbb{N}^\mathbb{N}$ such that every sequence $(a_n)_{n=1}^\infty$ in $\mathfrak{M}$ satisfies
	$a_n\geq \psi(n)$ for all sufficiently large $n$. Then
	\begin{gather*}
		\dim_H \left\{x\in(0,1]\colon (p_n(x))_{n=1}^\infty\in\mathfrak{M}\right\}=\dim_H \left\{x\in(0,1]\colon (p'_n(x)-n+1)_{n=1}^\infty\in\mathfrak{M}\right\},
	\end{gather*}
	where $p_n(x)$ and $p'_n(x)$ denote the $n$th digits of the classical and modified Engel expansions of $x$, respectively.
\end{principle}

\subsection{Fractal quasi-equivalence principle for the Pierce expansion in Perron and traditional notations}

The modified Engel and Pierce expansions are particular cases of the positive and alternating Perron expansions, both defined by the sequence $P=\left(\varphi_n\right)_{n=0}^\infty$ with $\varphi_0=1$ and $\varphi_n(x_1,\ldots,x_n)=x_n$. 
As previously shown, the~transformation~$\mathcal{F}_P$ preserves the Hausdorff dimension. Note that series~\eqref{AltPerronSer} defines the Perron notation of the~Pierce expansion, which slightly differs from the traditional notation. Namely, the digits of the~Pierce expansion in the Perron notation exceed those in the traditional notation by one:
$$q_n(x)=\widetilde{q}_n(x)+1,$$
where $q_n(x)$ and $\widetilde{q}_n(x)$ denote the $n$th digits in the Perron and traditional notations, respectively. Consequently, a condition that holds for the sequence $(q_n(x))_{n=1}^\infty$ may fail to hold for the sequence $(\widetilde{q}_n(x))_{n=1}^\infty$, and vice versa. In the Perron notation for the Pierce expansion, we use the following conventions: the Pierce expansion of $x$ is denoted by $\Delta^\mathrm{Pierce}_{c_1 c_2\ldots}$; the Pierce cylinder of rank $k$ with base $c_1\ldots c_k$ is denoted by $\Delta^\mathrm{Pierce}_{c_1 \ldots c_k}$; and the faithful family $\mathfrak{P}^-$ is denoted by $\mathfrak{P}^-_\mathrm{Pierce}$.

Let $\mathfrak{M}\subset\mathbb{N}^\mathbb{N}$. In general, $$\left\{x\in(0,1)\setminus\mathbb{Q}\colon (q_n(x))_{n=1}^\infty\in\mathfrak{M}\right\}\not=\left\{x\in(0,1)\setminus\mathbb{Q}\colon (\widetilde{q}_n(x))_{n=1}^\infty\in\mathfrak{M}\right\}.$$
Therefore, Theorem~\ref{preserveTh2} alone does not suffice to establish analogies between the modified Engel expansion and the Pierce expansion in the traditional notation. To partially bridge this gap, we introduce the function $\mathcal{G}\colon(0,1)\setminus\mathbb{Q}\to(0,1)\setminus\mathbb{Q}$ defined by
$$\mathcal{G}(\Delta^\text{Pierce}_{c_1 c_2\ldots})=\Delta^\text{Pierce}_{(c_1+1) (c_2+1)\ldots},$$
that is, if $\mathcal{G}(x)=x'$, then $q_n(x')=q_n(x)+1$ for all $n\in\mathbb{N}$. From the definition of $\mathcal{G}$, it follows that:
\begin{itemize}
	\item $\widetilde{q}_n(x')=q_n(x)$ for all $n\in\mathbb{N}$.
	\item $\mathcal{G}\left(\Delta^\text{Pierce}_{c_1\ldots c_k}\right)=\Delta^\text{Pierce}_{(c_1+1)\ldots (c_k+1)}$;
	\item if $U\in\mathfrak{P}^-_\text{Pierce}$, then $\mathcal{G}(U)\in\mathfrak{P}^-_\text{Pierce}$.
\end{itemize}

For a positive function $\psi\colon \mathbb{N}\to\mathbb{R}$, we define the set $$\mathfrak{B}_\psi=\left\{x\in(0,1)\setminus\mathbb{Q}\colon q_n(x)\geq\psi(n)\text{ ~for all sufficiently large } n\right\}.$$

\begin{theorem}\label{preserveTh3}
	If $\sum_{n=1}^\infty\frac{1}{\psi(n)}<\infty$, then for every set $E\subset\mathfrak{B}_\psi$, we have $$\dim_H\mathcal{G}(E)=\dim_H E.$$
\end{theorem}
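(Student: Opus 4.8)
The plan is to mirror the proof of Theorem~\ref{preserveTh2}, working with the faithful family $\mathfrak{P}^-_\mathrm{Pierce}$ (Theorem~\ref{theoremdovirchistdimalt}) and controlling the diameter distortion of $\mathcal{G}$. From~\eqref{cyl} with $\varphi_0=1$ and $\varphi_n(x_1,\dots,x_n)=x_n$ one has $|\Delta^\mathrm{Pierce}_{c_1\ldots c_n}|=\bigl((c_1-1)\cdots(c_{n-1}-1)(c_n-1)c_n\bigr)^{-1}$, so
$$\frac{|\mathcal{G}(\Delta^\mathrm{Pierce}_{c_1\ldots c_n})|}{|\Delta^\mathrm{Pierce}_{c_1\ldots c_n}|}=\frac{c_1-1}{c_1}\cdot\frac{c_2-1}{c_2}\cdots\frac{c_{n-1}-1}{c_{n-1}}\cdot\frac{c_n-1}{c_n+1}<1.$$
Since each $M\in\mathfrak{P}^-_\mathrm{Pierce}$ is a union of consecutive $P^-$-cylinders of one rank and $\mathcal{G}$ sends it to such a union of the same rank, $|\mathcal{G}(M)|\le|M|$; hence $H^\alpha(\mathcal{G}(E),\mathfrak{P}^-_\mathrm{Pierce})\le H^\alpha(E,\mathfrak{P}^-_\mathrm{Pierce})$ for every $E\subset(0,1)\setminus\mathbb{Q}$, and by faithfulness $\dim_H\mathcal{G}(E)\le\dim_H E$. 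This inequality needs neither $E\subset\mathfrak{B}_\psi$ nor the hypothesis on $\psi$; those enter only in the reverse inequality.

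For the reverse inequality I would stratify $\mathfrak{B}_\psi=\bigcup_{k\ge1}\mathfrak{B}_\psi^k$ with $\mathfrak{B}_\psi^k=\{x\in(0,1)\setminus\mathbb{Q}:q_n(x)\ge\psi(n)\text{ for all }n\ge k\}$, and set $E^k=E\cap\mathfrak{B}_\psi^k$, so $E=\bigcup_k E^k$ and $\mathcal{G}(E)=\bigcup_k\mathcal{G}(E^k)$. The crucial step is to produce, for each $k$, a constant $m_k>0$ \emph{independent of $n$} with $|\mathcal{G}(\Delta^\mathrm{Pierce}_{c_1\ldots c_n})|\ge m_k\,|\Delta^\mathrm{Pierce}_{c_1\ldots c_n}|$ for every cylinder $\Delta^\mathrm{Pierce}_{c_1\ldots c_n}$ meeting $E^k$. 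In the product above I would bound the first $k-1$ factors using the strict monotonicity of Pierce digits in Perron notation, $c_m\ge m+1$, by $\prod_{m=1}^{k-1}\frac{m}{m+1}=\frac{1}{k}$; the factors with $k\le m\le n-1$ using $c_m\ge\psi(m)$ together with $c_m\ge2$, by the tail product $\prod_{m=k}^{\infty}\bigl(1-\tfrac{1}{\max(\psi(m),2)}\bigr)$; and the final factor $\frac{c_n-1}{c_n+1}$ by $\tfrac{1}{3}$ (since $c_n\ge2$). The strict positivity of the tail product is exactly where $\sum_n 1/\psi(n)<\infty$ is used; note the contrast with Theorem~\ref{preserveTh2}, where the analogue of $\mathcal{G}$ shifts the $n$th digit by $n-1$ rather than by the constant $1$, which forces the stronger condition $\sum_n n/\psi(n)<\infty$.

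Passing from cylinders to sets of $\mathfrak{P}^-_\mathrm{Pierce}$ is routine: if $U\in\mathfrak{P}^-_\mathrm{Pierce}$ has rank $\ge k$ and all of its constituent cylinders meet $E^k$, then $|\mathcal{G}(U)|=\sum|\mathcal{G}(\Delta)|\ge m_k\sum|\Delta|=m_k|U|$; for rank $<k$ a simpler estimate (using only $c_m\ge m+1$) gives an analogous bound; and an arbitrary $U\in\mathfrak{P}^-_\mathrm{Pierce}$ can be replaced by the union of its cylinders meeting $E^k$, which is again a consecutive block, hence still in $\mathfrak{P}^-_\mathrm{Pierce}$, and not larger. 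Using that $\mathcal{G}$ is injective on $(0,1)\setminus\mathbb{Q}$ (its image is $\{x:q_1(x)\ge3\}$, which contains $\mathcal{G}(E^k)$), any cover of $\mathcal{G}(E^k)$ by sets of $\mathfrak{P}^-_\mathrm{Pierce}$ — after discarding from each cover set the cylinders disjoint from $\mathcal{G}(E^k)$, which makes it a $\mathcal{G}$-image of a set of $\mathfrak{P}^-_\mathrm{Pierce}$ — pulls back to a cover of $E^k$ by sets of $\mathfrak{P}^-_\mathrm{Pierce}$ with diameters multiplied by at most $m_k^{-1}$.

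Combining these estimates exactly as in Theorem~\ref{preserveTh2} yields
$$m_k^\alpha\,H^\alpha(E^k,\mathfrak{P}^-_\mathrm{Pierce})\le H^\alpha(\mathcal{G}(E^k),\mathfrak{P}^-_\mathrm{Pierce})\le H^\alpha(E^k,\mathfrak{P}^-_\mathrm{Pierce}),$$
so $\dim_H(\mathcal{G}(E^k),\mathfrak{P}^-_\mathrm{Pierce})=\dim_H(E^k,\mathfrak{P}^-_\mathrm{Pierce})$, and, by faithfulness of $\mathfrak{P}^-_\mathrm{Pierce}$, $\dim_H\mathcal{G}(E^k)=\dim_H E^k$. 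Countable stability of Hausdorff dimension then gives $\dim_H\mathcal{G}(E)=\sup_k\dim_H\mathcal{G}(E^k)=\sup_k\dim_H E^k=\dim_H E$. The main obstacle is precisely the uniform-in-$n$ lower bound $m_k$ on the distortion ratio, i.e. the positivity of the tail product $\prod_{m\ge k}\bigl(1-1/\psi(m)\bigr)$; once that is in hand the rest is the same bookkeeping as in Theorem~\ref{preserveTh2}, the only additional wrinkle being the non-surjectivity of $\mathcal{G}$, which is absorbed by the discarding step above.
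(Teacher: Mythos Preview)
Your proposal is correct and follows exactly the scheme the paper indicates (the paper's own ``proof'' is just the sentence that it follows the same scheme as Theorem~\ref{preserveTh2}, with the weaker hypothesis explained by the fact that $\mathcal{G}$ shifts each digit by $1$ rather than by $n-1$). Your computation of the distortion ratio, the stratification $\mathfrak{B}_\psi=\bigcup_k\mathfrak{B}_\psi^k$, the lower bound $m_k$ via $c_m\ge m+1$ for $m<k$, the tail product $\prod_{m\ge k}(1-1/\max(\psi(m),2))$ for $m\ge k$, and the handling of the non-surjectivity of $\mathcal{G}$ are all correct refinements of what the paper leaves implicit.
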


The proof follows the same scheme as in Theorem~\ref{preserveTh2}. The weaker condition on $\psi$ here arises from the fact that $\mathcal{G}$ increases each digit of the Pierce expansion by a constant independent of $n$.

Define the sets $D$ and $\widetilde{D}$ by
\begin{gather*}
	D=\left\{x\in(0,1)\setminus\mathbb{Q}\colon (q_n(x))_{n=1}^\infty\in\mathfrak{M}\right\},\\
	\widetilde{D}=\left\{x\in(0,1)\setminus\mathbb{Q}\colon(\widetilde{q}_n(x))_{n=1}^\infty\in\mathfrak{M}\right\}.
\end{gather*}
In general, $\mathcal{G}(D)\subseteq\widetilde{D}$. Indeed, if $x\in D$, then $(q_n(x))_{n=1}^\infty\in\mathfrak{M}$, so $(\widetilde{q}_n(\mathcal{G}(x)))_{n=1}^\infty\in\mathfrak{M}$, and hence $\mathcal{G}(x)\in\widetilde{D}$. However, if there exists $x'$ with $(\widetilde{q}_n(x'))_{n=1}^\infty\in\mathfrak{M}$ and $\widetilde{q}_1(x')=1$, then $x'$ cannot be obtained as $\mathcal{G}(x)$ for any $x$.

\begin{corollary}
	If $\sum_{n=1}^\infty\frac{1}{\psi(n)}<\infty$, then for every set $D\subseteq\mathfrak{B}_\psi$, we have
	$$\dim_H \widetilde{D}\geq \dim_H D.$$
\end{corollary}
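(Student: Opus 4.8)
The plan is to combine Theorem~\ref{preserveTh3} with the set-theoretic inclusion $\mathcal{G}(D)\subseteq\widetilde{D}$ established just above, together with the fact that the Hausdorff dimension is monotone under set inclusion. First I would observe that since $D\subseteq\mathfrak{B}_\psi$, Theorem~\ref{preserveTh3} applies to $D$ (and to every subset of $D$), giving
$$\dim_H\mathcal{G}(D)=\dim_H D.$$
Next I would invoke the inclusion $\mathcal{G}(D)\subseteq\widetilde{D}$, which was verified in the paragraph preceding the corollary: for $x\in D$ we have $(q_n(x))_{n=1}^\infty\in\mathfrak{M}$, hence $(\widetilde{q}_n(\mathcal{G}(x)))_{n=1}^\infty=(q_n(x))_{n=1}^\infty\in\mathfrak{M}$, so $\mathcal{G}(x)\in\widetilde{D}$. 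By monotonicity of Hausdorff dimension,
$$\dim_H\widetilde{D}\geq\dim_H\mathcal{G}(D)=\dim_H D,$$
which is exactly the claim.

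There is essentially no obstacle here: the corollary is a direct logical consequence of the theorem and an inclusion that has already been spelled out. The only point worth a sentence of care is the hypothesis check — one must confirm that the condition $D\subseteq\mathfrak{B}_\psi$ is precisely what licenses the application of Theorem~\ref{preserveTh3} to the set $D$ itself (not merely to $\mathfrak{B}_\psi$), which is immediate since Theorem~\ref{preserveTh3} is stated for \emph{every} set $E\subset\mathfrak{B}_\psi$. One could also remark that the inequality can be strict — the discussion of $x'$ with $\widetilde{q}_1(x')=1$ shows $\widetilde{D}$ may genuinely be larger than $\mathcal{G}(D)$ — but this is not needed for the proof and I would leave it as the surrounding commentary already does.
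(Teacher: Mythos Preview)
Your proposal is correct and matches the paper's intended argument: the paper states the corollary immediately after Theorem~\ref{preserveTh3} and the inclusion $\mathcal{G}(D)\subseteq\widetilde{D}$ without giving a separate proof, precisely because it follows in the two steps you describe (apply the theorem to $D\subseteq\mathfrak{B}_\psi$, then use monotonicity of Hausdorff dimension under the inclusion).
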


\begin{lemma}\label{preserveLemma}
	If $\widetilde{q}_1(x)\geq 2$ for all $x\in\widetilde{D}$, then $\mathcal{G}(D)=\widetilde{D}$.
\end{lemma}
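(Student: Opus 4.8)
The plan is to prove the two inclusions $\mathcal{G}(D)\subseteq\widetilde{D}$ and $\widetilde{D}\subseteq\mathcal{G}(D)$; the first has already been checked in the discussion preceding the lemma, so only the second needs the hypothesis $\widetilde{q}_1(x)\geq 2$ on $\widetilde{D}$. The whole argument rests on the bookkeeping identity $q_n=\widetilde{q}_n+1$ linking the Perron and traditional notations, together with the admissibility description of digit sequences: a sequence $(c_n)_{n=1}^\infty$ of natural numbers is the Perron-notation Pierce digit sequence of some (automatically irrational) point of $(0,1)$ exactly when it is strictly increasing with $c_1\geq 2$, equivalently when $(c_n-1)_{n=1}^\infty$ is a strictly increasing sequence of traditional Pierce digits.

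First I would fix $x'\in\widetilde{D}$ and consider the sequence $(\widetilde{q}_n(x'))_{n=1}^\infty$. It is strictly increasing because traditional Pierce digits always are, and its first term is $\geq 2$ by the hypothesis on $\widetilde{D}$; hence it is an admissible sequence of Perron-notation Pierce digits, and there is a unique $x\in(0,1)\setminus\mathbb{Q}$ with $q_n(x)=\widetilde{q}_n(x')$ for every $n$. Next I would verify that this $x$ is the desired preimage: since $x'\in\widetilde{D}$, we have $(q_n(x))_{n=1}^\infty=(\widetilde{q}_n(x'))_{n=1}^\infty\in\mathfrak{M}$, so $x\in D$; and $q_n(\mathcal{G}(x))=q_n(x)+1=\widetilde{q}_n(x')+1=q_n(x')$ for all $n$, so $\mathcal{G}(x)$ and $x'$ have the same Perron-notation Pierce expansion and therefore coincide. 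This gives $x'\in\mathcal{G}(D)$, hence $\widetilde{D}\subseteq\mathcal{G}(D)$, and combined with $\mathcal{G}(D)\subseteq\widetilde{D}$ we conclude $\mathcal{G}(D)=\widetilde{D}$.

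The only point requiring care is the admissibility statement used to produce the preimage $x$: one must invoke the structural description of Pierce (alternating Perron) expansions from the preliminaries — namely that strictly increasing digit sequences correspond bijectively to irrationals in $(0,1)$ and that distinct sequences yield distinct points — in order to know both that $x$ exists and that it is unique. This is exactly the place where the hypothesis $\widetilde{q}_1\geq 2$ is indispensable: a point $x'\in\widetilde{D}$ with $\widetilde{q}_1(x')=1$ would force the candidate first digit $c_1=1$, which violates $c_1\geq 2$, so $x'$ would have no $\mathcal{G}$-preimage, as already remarked before the lemma. I expect this verification to be routine once the admissibility facts are cited, so there is no substantial obstacle beyond correctly tracking the $+1$ shift between the two notations.
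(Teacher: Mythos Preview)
Your proposal is correct and follows essentially the same approach as the paper: the paper's proof is a one-line appeal to the admissibility fact that every strictly increasing sequence $(c_n)_{n=1}^\infty$ of natural numbers with $c_1\geq 2$ arises as the Perron-notation Pierce digit sequence of a unique irrational in $(0,1)$, and you have simply spelled out in detail how this fact produces the preimage $x$ and why it lies in $D$ with $\mathcal{G}(x)=x'$.
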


This lemma follows from the fact that for every strictly increasing sequence $(c_n)_{n=1}^\infty$ of natural numbers with $c_1\geq 2$, there exists a unique number $x\in(0,1)\setminus\mathbb{Q}$ such that $q_n(x)=c_n$ for all $n\in\mathbb{N}$.

\begin{corollary}\label{preserveCor3}
	If $\sum_{n=1}^\infty\frac{1}{\psi(n)}<\infty$, $D\subseteq\mathfrak{B}_\psi$, and  $\widetilde{q}_1(x)\geq 2$ for all $x\in\widetilde{D}$, then $$\dim_H\widetilde{D}=\dim_H D.$$
\end{corollary}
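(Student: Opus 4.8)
The plan is to obtain Corollary~\ref{preserveCor3} by combining the equality case from Theorem~\ref{preserveTh3} with the description of $\mathcal{G}(D)$ provided by Lemma~\ref{preserveLemma}. Concretely, the hypotheses $\sum_{n=1}^\infty \frac{1}{\psi(n)}<\infty$ and $D\subseteq\mathfrak{B}_\psi$ are exactly what Theorem~\ref{preserveTh3} requires, so we immediately get $\dim_H\mathcal{G}(D)=\dim_H D$. The extra hypothesis $\widetilde{q}_1(x)\geq 2$ for all $x\in\widetilde{D}$ is precisely the condition under which Lemma~\ref{preserveLemma} gives $\mathcal{G}(D)=\widetilde{D}$. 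Substituting the second equality into the first yields $\dim_H\widetilde{D}=\dim_H\mathcal{G}(D)=\dim_H D$, which is the claim.

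First I would state that since $D\subseteq\mathfrak{B}_\psi$ and $\sum_{n=1}^\infty\frac{1}{\psi(n)}<\infty$, Theorem~\ref{preserveTh3} applies to $E=D$, giving $\dim_H\mathcal{G}(D)=\dim_H D$. Next I would invoke the standing assumption $\widetilde{q}_1(x)\geq 2$ for all $x\in\widetilde{D}$ to apply Lemma~\ref{preserveLemma}, obtaining $\mathcal{G}(D)=\widetilde{D}$ as sets. The only point worth spelling out is why the inclusion $\mathcal{G}(D)\subseteq\widetilde{D}$ (already noted in the text before the lemma) becomes an equality: every $x'\in\widetilde{D}$ has $(\widetilde{q}_n(x'))_{n=1}^\infty\in\mathfrak{M}$ with $\widetilde{q}_1(x')\geq 2$, and since this sequence is strictly increasing with first term $\geq 2$, there is a unique $x\in(0,1)\setminus\mathbb{Q}$ with $q_n(x)=\widetilde{q}_n(x')$ for all $n$; then $(q_n(x))_{n=1}^\infty\in\mathfrak{M}$ forces $x\in D$, and $\widetilde{q}_n(\mathcal{G}(x))=q_n(x)=\widetilde{q}_n(x')$ forces $\mathcal{G}(x)=x'$. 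Combining the two displayed equalities finishes the proof.

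There is essentially no obstacle here: this corollary is a purely formal consequence of the two preceding results, and the one substantive ingredient — the realizability of arbitrary strictly increasing integer sequences with first term at least $2$ as Pierce digit sequences — has already been recorded in the remark following Lemma~\ref{preserveLemma}. Accordingly I would keep the proof to two or three sentences.

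\begin{proof}
Since $D\subseteq\mathfrak{B}_\psi$ and $\sum_{n=1}^\infty\frac{1}{\psi(n)}<\infty$, Theorem~\ref{preserveTh3} yields $\dim_H\mathcal{G}(D)=\dim_H D$. On the other hand, the assumption that $\widetilde{q}_1(x)\geq 2$ for all $x\in\widetilde{D}$ allows us to apply Lemma~\ref{preserveLemma}, which gives $\mathcal{G}(D)=\widetilde{D}$. Combining these two equalities, we obtain
\begin{equation*}
\dim_H\widetilde{D}=\dim_H\mathcal{G}(D)=\dim_H D.\qedhere
\end{equation*}
\end{proof}
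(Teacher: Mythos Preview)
Your proof is correct and matches the paper's approach: the corollary is stated there without a written proof, as it follows immediately from Theorem~\ref{preserveTh3} and Lemma~\ref{preserveLemma} exactly as you describe.
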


Thus, we obtain the following principle.

\begin{principle}[Fractal quasi-equivalence principle for the Pierce expansion in the Perron and traditional notations.]
	Let $\psi\colon\mathbb{N}\to\mathbb{R}^+$ be a positive function satisfying $\sum_{n=1}^{\infty}\frac{1}{\psi(n)}<\infty$, and let $\mathfrak{M}$ be a subset of $\mathbb{N}^\mathbb{N}$ such that every sequence $(a_n)_{n=1}^\infty$ in $\mathfrak{M}$ satisfies
	$a_n\geq \psi(n)$ for all sufficiently large $n$, and $a_1\geq 2$. Then
	\begin{gather*}
		\dim_H \left\{x\in(0,1)\setminus\mathbb{Q}\colon (q_n(x))_{n=1}^\infty\in\mathfrak{M}\right\}=\dim_H \left\{x\in(0,1)\setminus\mathbb{Q}\colon (\widetilde{q}_n(x))_{n=1}^\infty\in\mathfrak{M}\right\},
	\end{gather*}
	where $q_n(x)$ and $\widetilde{q}_n(x)$ denote the $n$th digits of the Pierce expansion of $x$ in the Perron and traditional notations, respectively.
\end{principle}

\section{Explanation of known analogies via fractal principles}

In this section, we show how new fractal principles explain known analogies between the modified Engel and Pierce expansions and between the classical and modified Engel expansions. These principles not only explain why such analogies arise, but also demonstrate that some properties need not be proved independently: they follow directly from their analogues once combined with our results.

Throughout this section, $p_n(x)$ and $p'_n(x)$ denote the $n$th digits of the classical and modified Engel expansions of $x$, respectively. Similarly, $q_n(x)$ and $\widetilde{q}_n(x)$ denote the $n$th digits of the Pierce expansion of $x$ in the Perron and traditional notations, respectively.

\subsection{Explanation of known analogies between the modified Engel and Pierce expansions via the~fractal equivalence principle for the Perron expansions.}

\begin{analogy}
	In \cite{WW2006}, B. W. Wang and J. Wu investigated Oppenheim expansions and determined the Hausdorff dimension of certain sets defined by conditions on the digits of these expansions. For the modified Engel expansion (see \cite[Corollary 2.7]{WW2006}), they proved that
	$$\dim_H\left\{x\in(0,1]\colon\lim_{n\to\infty}\frac{\log p'_{n+1}(x)}{\log p'_n(x)}\right\}=\frac{1}{\alpha}$$
	for all $\alpha\in[1,\infty)$. In \cite{A2024}, M.W.~Ahn calculated the Hausdorff dimension of the analogous set for the Pierce expansion in the traditional notation:
	$$F(\alpha)=\left\{x\in(0,1]\colon\lim_{n\to\infty}\frac{\log \widetilde{q}_{n+1}(x)}{\log \widetilde{q}_n(x)}=\alpha\right\}.$$
	In particular (see \cite[Theorem 1.12]{A2024}), $\dim_H F(\alpha)=1/\alpha$ for $\alpha\in[1,\infty]$ with the convention $1/\infty=0$. For $\alpha\in[1,\infty)$, the theorem of Ahn follows directly from result of Wang and Wu in combination with Theorem~\ref{preserveTh1} (the fractal equivalence principle for the Perron expansions), since 
	$$\lim_{n\to\infty}\frac{\log \widetilde{q}_{n+1}(x)}{\log \widetilde{q}_n(x)}=\alpha\iff\lim_{n\to\infty}\frac{\log q_{n+1}(x)}{\log q_n(x)}=\alpha.$$
	
	The case $\dim_H F(\infty)=0$ follows from a known result on the Pierce expansion, and we include a short proof for completeness.
	
	In \cite{FengTan2020}, Y. Feng and B. Tan investigated the set
	$$A(\varphi)=\left\{x\in[0,1)\colon \widetilde{q}_n(x)\geq\varphi(n) \text{ for infinitely many }n\in\mathbb{N}\right\},$$
	and proved that if $$\liminf_{n\to\infty}\frac{\log\log\varphi(n)}{n}=\log d\in[0,\infty],$$ 
	then $\dim_H A(\varphi)=1/d$ with the convention $1/\infty=0$ (see \cite[Theorem 1.1]{FengTan2020}).
	
	Let $x\in F(\infty)$. For any $M\in\mathbb{N}$, there exists $k=k(x)\geq 2$ such that 
	$$\frac{\log\widetilde{q}_{n+1}(x)}{\log\widetilde{q}_n(x)}>M+1 \text{ ~for all } n\geq k.$$
	Hence, 
	$$\widetilde{q}_{n}(x)>(\widetilde{q}_{n-1}(x))^{M+1}>\cdots>(\widetilde{q}_k(x))^{(M+1)^{n-k}}\geq 2^{(M+1)^{n-k}}.$$
	For sufficiently large $n$, the inequality $2^{(M+1)^{n-k}}>2^{M^n}$ holds, implying $\widetilde{q}_{n}(x)>2^{M^n}$ for infinitely many $n$. Thus, $x\in A(\varphi_M)$ and $F(\infty)\subseteq A(\varphi_M)$ with $\varphi_M(n)=2^{M^n}$. Moreover,  
	$$\liminf_{n\to\infty}\frac{\log\log\varphi(n)}{n}=\log M,$$
	so $\dim_H A(\varphi_M)=1/M$, and hence $\dim_H F(\infty)\leq 1/M$. Since $M$ is arbitrary, $\dim_H F(\infty)=0$.
	
	We note that in \cite{A2024} the dimension of $F(\infty)$ is established in Lemma 4.11 via a substantially more intricate argument, involving the construction of specific covers and estimates of the $\alpha$-Hausdorff measure. Our approach is shorter and, we believe, clearer.
\end{analogy}

\begin{analogy}
	In \cite{Wu2003}, J.~Wu calculated the Hausdorff dimension of certain sets defined by conditions on the~digit sequences of Oppenheim expansions. For the modified Engel expansion (see \cite[Corollary 3]{Wu2003}), Wu proved that the set
	$$\left\{x\in(0,1]\colon \lim_{n\to\infty}\frac{p'_{n+1}(x)}{p'_n(x)}=\alpha\right\}$$
	has Hausdorff dimension $1$ for all $\alpha\in[1,\infty)$. In \cite{A2024}, M.W.~Ahn determined the Hausdorff dimension of an~analogous set for the Pierce expansion in the traditional notation:
	$$B(\alpha)=\left\{x\in(0,1]\colon\lim_{n\to\infty}\frac{\widetilde{q}_{n+1}(x)}{\widetilde{q}_n(x)}=\alpha\right\}.$$
	In particular (see \cite[Theorem 1.8]{A2024}), $\dim_H B(\alpha)=1$ for all $\alpha\in[1,\infty]$.
	
	As in Analogy~1, we obtain that for $\alpha\in[1,\infty)$, the theorem of Ahn follows directly from the result of Wu in combination with the fractal equivalence principle for the Perron expansions. The inclusion $F(\alpha)\subset B(\infty)$ for all $\alpha>1$ implies that $\dim_H B(\infty)=1$, where $F(\alpha)$ is defined as in Analogy~1.
\end{analogy}

\begin{analogy}
	In \cite{ShangWu2021_1}, L. Shang and M. Wu investigated the exponent of convergence $\lambda(x)$ of $E$-digit sequence $(p_n(x))_{n=1}^\infty$, defined by
	$$\lambda(x)=\inf\left\{s\geq0\colon \sum_{n=1}^{\infty}\frac{1}{(p_n(x))^s}<\infty\right\}.$$
	In particular \cite[Theorem 4.1]{ShangWu2021_1}, they proved that
	\begin{equation*}
		\dim_H \left\{x\in(0,1]\colon\lambda(x)=\alpha\right\}
		=\dim_H \left\{x\in(0,1]\colon\lambda(x)\geq\alpha\right\}=
		\begin{cases}
			\begin{aligned}
				&1-\alpha, && 0\leq\alpha\leq 1; \\
				&0, &&  1<\alpha\leq \infty.
			\end{aligned}
		\end{cases}
	\end{equation*}
	We remark that for rational numbers we employ their infinite Engel expansions, whereas Shang and Wu consider only the finite analogue. Since rational numbers do not affect the Hausdorff dimension, this distinction is immaterial.
	
	Define the sets $S_\text{div}^E$ and $S_\text{div}^{E_\text{mod}}$ by
	\begin{gather*}
		S_\text{div}^E=\left\{x\in(0,1]\colon \sum_{n=1}^{\infty}\frac{1}{p_n(x)}=\infty\right\},\\
		S_\text{div}^{E_\text{mod}}=\left\{x\in(0,1]\colon \sum_{n=1}^{\infty}\frac{1}{p'_n(x)}=\infty\right\}.
	\end{gather*}
	Observe that $S_\text{div}^E\subseteq \left\{x\in(0,1]\colon\lambda(x)\geq 1\right\}$. Hence $\dim_H S_\text{div}^E=0$. Consider also 
	$$\mathcal{T}\left(S_\text{div}^E\right)=\left\{x\in(0,1]\colon \sum_{n=1}^{\infty}\frac{1}{p'_n(x)-n+1}=\infty \right\}.$$
	Note that $S_\text{div}^{E_\text{mod}}\subseteq \mathcal{T}\left(S_\text{div}^E\right)$. Corollary~\ref{ineqdim} implies that $\dim_H S_\text{div}^{E_\text{mod}}=0$. By Theorem~\ref{preserveTh1} (the fractal equivalence principle for the Perron expansions), we conclude that
	\begin{align*}
		\dim_H \left\{x\in(0,1)\setminus\mathbb{Q}\colon \sum_{n=1}^{\infty}\frac{1}{q_n(x)}=\infty\right\}
		=\dim_H \left\{x\in(0,1)\setminus\mathbb{Q}\colon \sum_{n=1}^{\infty}\frac{1}{\widetilde{q}_n(x)}=\infty\right\}=\dim_H S_\text{div}^{E_\text{mod}}=0.
	\end{align*}
	
	This result was previously established by Ahn (see \cite[Corollary~1.15]{A2025}) while studying the convergence exponent of Pierce expansion digit sequences. We also note that in the first arXiv version of \cite{A2025}, Ahn proved this result without using the convergence exponent.
\end{analogy}

\subsection{Explanation of known analogies between the classical Engel and Pierce expansions via the fractal quasi-equivalence principles for the Engel and Pierce expansions.}

\begin{analogy}
	In \cite{ShangWu2021}, L. Shang and M. Wu considered the set
	$$F_\psi=\left\{x\in(0,1]\colon \lim_{n\to\infty}\frac{\log\Delta_n(x)}{\psi(n)}=1\right\},$$
	where $\Delta_n:=p_{n}(x)-p_{n-1}(x)$ with $\Delta_1(x)=p_1(x)$ and $\psi(n)\colon\mathbb{N}\to\mathbb{R}^+$ is a non-decreasing function such that $\lim\limits_{n\to\infty}\frac{\psi(n)}{\log n}=\infty$. In particular \cite[Theorem~4.1]{ShangWu2021}, the authors proved that 
	$$\dim_H F_\psi=\frac{1}{1+\zeta},\qquad\text{where}\quad \zeta=\limsup_{n\to\infty}\frac{\psi(n+1)}{\psi(1)+\cdots+\psi(n)}.$$

	Consider the analogous set for the modified Engel expansion:
	$$F'_\psi=\left\{x\in(0,1]\colon \lim_{n\to\infty}\frac{\log\Delta'_n(x)}{\psi(n)}=1\right\},$$
	where $\Delta'_n:=p'_{n}(x)-p'_{n-1}(x)$ with $\Delta'_1(x)=p'_1(x)$, and $\psi$ as above.
	
	Let $x\in F_\psi$ and $x'=\mathcal{T}(x)$. Since $\Delta'_n(x')=\Delta_n(x)+1$ for all $n\geq 2$ and $\Delta'_1(x')=\Delta_1(x)$, it follows that $F'_\psi=\mathcal{T}(F_\psi)$. By assumption, $\lim\limits_{n\to\infty}\frac{\psi(n)}{\log n}=\infty$. So $\psi(n)>4\log n$ and $$p_n(x)>p_{n-1}(x)+n^{4(1+\varepsilon_n(x))}>n^{4(1+\varepsilon_n(x))}$$ for all sufficiently large $n$, where $\varepsilon_n(x)\to 0$ as $n\to\infty$. Hence $p_n(x)>n^3$ for all sufficiently large $n$. It is readily verified that $F_\psi\subset\mathfrak{A}_{n^3}$. Therefore, using Theorem~\ref{preserveTh2}, it follows that $$\dim_H F'_\psi=\dim_H F_\psi=\frac{1}{1+\zeta}.$$
	
	Applying the fractal equivalence principle for Perron expansions, we obtain that the corresponding sets defined in terms of $q_n(x)$ and $\widetilde{q}_n(x)$ also have Hausdorff dimension $\frac{1}{1+\zeta}$.
	Therefore, Theorem~1.4 from \cite{LLSh2026} follows directly from the result of Shang and Wu in combination with the fractal (quasi-)equivalence principles.
\end{analogy}

\begin{analogy}\label{5}
	In \cite[Theorem 4.4]{FangWu2018}, L.~Fang and M.~Wu considered the set
	$$\widetilde{F}(\varphi)=\left\{x\in(0,1]\colon p_n(x)\geq\varphi(n) \text{ for all } n\in\mathbb{N}\right\},$$
	where $\varphi\colon\mathbb{N}\to\mathbb{R}^+$, and proved that $\dim_H \widetilde{F}(\varphi)=1/\gamma$ with the convention $1/\infty=0$, where $\gamma$ is given by $\log\gamma=\limsup\limits_{n\to\infty}\frac{\log\log \varphi(n)}{n}$.
	
	Consider the set $$\widetilde{F}_\text{mod}(\varphi)=\left\{x\in(0,1]\colon p'_n(x)\geq\varphi(n) \text{ for all } n\in\mathbb{N}\right\}.$$
	Assume that $\varphi(n)\geq n+1$, since $p'_n(x)\geq n+1$. Let $\varphi'\colon\mathbb{N}\to\mathbb{R}^+$ be given by $\varphi'(n)=\varphi(n)-n+1$ for all $n\in\mathbb{N}$. Then
	$$\limsup\limits_{n\to\infty}\frac{\log\log \varphi'(n)}{n}=\limsup\limits_{n\to\infty}\frac{\log\log \varphi(n)}{n},$$
	$\dim_H\widetilde{F}(\varphi')=\dim_H\widetilde{F}(\varphi)$, and $\widetilde{F}_\text{mod}(\varphi)=\mathcal{T}\left(\widetilde{F}(\varphi')\right)$. 
	Corollary \ref{ineqdim} implies that
	$$\dim_H \widetilde{F}_\text{mod}(\varphi)\leq\dim_H \widetilde{F}(\varphi')=\dim_H \widetilde{F}(\varphi).$$
	If $\limsup\limits_{n\to\infty}\frac{\log\log \varphi(n)}{n}=\infty$, then $\dim_H \widetilde{F}_\text{mod}(\varphi)= \dim_H \widetilde{F}(\varphi)=0.$
	
	Assume that 
	$\lambda=\limsup\limits_{n\to\infty}\frac{\log\log \varphi(n)}{n}<\infty$ and define a function $\omega\colon\mathbb{N}\to\mathbb{R}^+$  by
	$$\omega(n)=\max\left\{\varphi'(n),e^{ne^{\lambda n}}\right\}.$$
	Then $\widetilde{F}(\varphi')\supset \widetilde{F}(\omega)$ and $\widetilde{F}_\text{mod}(\varphi)\supset \mathcal{T}\left(\widetilde{F}(\omega)\right)$, and hence
	$$\dim_H  \widetilde{F}_\text{mod}(\varphi)\geq \dim_H \mathcal{T}\left(\widetilde{F}(\omega)\right).$$
	Moreover, $\sum_{n=1}^{\infty}\frac{n}{\omega(n)}<\infty$ and $\limsup\limits_{n\to\infty}\frac{\log\log \omega(n)}{n}=\lambda$. Using the fractal quasi-equivalence principle for Engel expansions, we obtain 
	\begin{equation*}
		\dim_H \mathcal{T}\left(\widetilde{F}(\omega)\right)= \dim_H \widetilde{F}(\omega)=\dim_H \widetilde{F}(\varphi).
	\end{equation*}
	Hence, $\dim_H  \widetilde{F}_\text{mod}(\varphi)=\dim_H \widetilde{F}(\varphi)$.
	
	Consider the set 
	$$\widetilde{F}^*_\text{mod}(\varphi)=\left\{x\in(0,1]\colon p'_n(x)\geq\varphi(n) \text{ for sufficiently large } n\in\mathbb{N}\right\}.$$
	Since $\widetilde{F}^*_\text{mod}(\varphi)=\bigcup_{k=1}^\infty\widetilde{F}_\text{mod}(\varphi_k)$, where $\varphi_k$ given by
	$$\varphi_k(n)=\begin{cases}
		n+1,\quad&\text{if }n<k,\\
		\varphi(n),\quad&\text{if }n\geq k,
	\end{cases}$$
	then $\dim_H \widetilde{F}^*_\text{mod}(\varphi)=\sup\left\{\dim_H \widetilde{F}_\text{mod}(\varphi_k)\right\}=\dim_H \widetilde{F}_\text{mod}(\varphi)=\dim_H\widetilde{F}(\varphi)$.
	
	Applying the fractal equivalence principle for Perron expansions, we obtain that the corresponding sets defined in terms of $q_n(x)$ and $\widetilde{q}_n(x)$ with the conditions ''for all $n$'' and ``for all sufficiently large $n$'' also have Hausdorff dimension $1/\gamma$. Thus, Theorem~1.2 in \cite{FengTan2020} and Theorem~1.1 in \cite{LLL2025_1} follow directly from the results of Fang and Wu in combination with the fractal (quasi-)equivalence principles. In particular, this implies the result of Ahn from \cite[Theorem 1.6]{A2024}.
\end{analogy}

\begin{analogy}\label{6}
	In \cite[Theorem 4.2]{FangWu2018}, L.~Fang and M.~Wu considered the set
	$$F(\varphi)=\left\{x\in(0,1]\colon p_n(x)\geq\varphi(n) \text{ for infinitely many } n\in\mathbb{N}\right\},$$
	where $\varphi\colon\mathbb{N}\to\mathbb{R}^+$, and proved that $\dim_H F(\varphi)=1/\beta$, where $\beta$ is given by $\log\beta=\liminf\limits_{n\to\infty}\frac{\log\log \varphi(n)}{n}$.
	
	Consider the set $$F_\text{mod}(\varphi)=\left\{x\in(0,1]\colon p'_n(x)\geq\varphi(n) \text{ for infinitely many } n\in\mathbb{N}\right\}.$$
	As in analogy 5, we have
	$$\dim_H F_\text{mod}(\varphi)\leq \dim_H F(\varphi)=1/\beta.$$
	Moreover, if $\liminf\limits_{n\to\infty}\frac{\log\log \varphi(n)}{n}=\infty$, then $\dim_H F_\text{mod}(\varphi)= \dim_H F(\varphi)=0.$
	
	Let
	$\lambda=\liminf\limits_{n\to\infty}\frac{\log\log \varphi(n)}{n}<\infty$. For all $\varepsilon>0$, define a function $\omega_\varepsilon\colon\mathbb{N}\to\mathbb{R}^+$ given by
	$$\omega_\varepsilon(n)=e^{e^{(\lambda+\varepsilon) n}}.$$
	Since $\frac{\log\log \omega_\varepsilon(n)}{n}=\lambda+\varepsilon$, we have $\omega_\varepsilon(n)\geq \varphi(n)$ for infinitely many $n$ and $F_\text{mod}(\varphi)\supset \widetilde{F}(\omega_\varepsilon)$, where the set $\widetilde{F}(\cdot)$ is defined as in Analogy~5. Therefore, 
	$$\dim_H F_\text{mod}(\varphi)\geq \dim_H \widetilde{F}(\omega_\varepsilon)=\frac{1}{e^{\lambda+\varepsilon}}=\frac{1}{\beta\cdot e^\varepsilon}$$
	for all $\varepsilon>0$, and hence $\dim_H F_\text{mod}(\varphi)\geq 1/\beta$. Thus,
	$$\dim_H F_\text{mod}(\varphi)= \dim_H F(\varphi)=1/\beta.$$
	
	Applying the fractal equivalence principle for Perron expansions, we obtain that the corresponding sets defined in terms of $q_n(x)$ and $\widetilde{q}_n(x)$ with the condition ''for infinitely many $n$'' also have Hausdorff dimension $1/\beta$.	Thus, Theorem 1.1 in \cite{FengTan2020} and Theorem~1.2 in \cite{LLL2025_1} follow directly from the results of Fang and Wu in combination with the fractal (quasi-)equivalence principles.
\end{analogy}

\begin{analogy}
	In \cite[Theorems~4.9 and 4.10]{FangWu2018}, L.~Fang and M.~Wu proved that
	$$\dim_H\left\{x\in(0,1]\colon p_{n+1}(x)-p_n(x)\geq \varphi(n)\text{ for all }n\in\mathbb{N}\right\}=1/\gamma$$
	and
	$$\dim_H\left\{x\in(0,1]\colon p_{n+1}(x)-p_n(x)\geq \varphi(n)\text{ for infinitely many }n\in\mathbb{N}\right\}=1/\beta$$
	where $\gamma$ and $\beta$ are defined as above in Analogies~5 and~6.
	
	Arguing as in the two previous analogies, we obtain that the corresponding sets defined in terms of $p'_n(x)$, $q_n(x)$, and $\widetilde{q}_n(x)$ with the conditions ``for all $n$'' and ``for all sufficiently large $n$'' have Hausdorff dimension $1/\gamma$, whereas those defined with the condition ``for infinitely many $n$'' have Hausdorff dimension $1/\beta$. Thus, Theorem~1.3 and Theorem~1.4 in \cite{LLL2025_1} follow directly from the result Fang and Wu in combination with the fractal (quasi-)equivalence principles.
\end{analogy}

\begin{analogy}
	In \cite[Theorems~4.13 and~4.14]{FangWu2018}, L.~Fang and M.~Wu considered the sets
	$$\widetilde{R}(\varphi)=\left\{x\in(0,1]\colon \frac{p_{n+1}(x)}{p_n(x)}\geq \varphi(n)\text{ for all }n\in\mathbb{N}\right\},$$
	and
	$$R(\varphi)=\left\{x\in(0,1]\colon \frac{p_{n+1}(x)}{p_n(x)}\geq \varphi(n)\text{ for infinitely many }n\in\mathbb{N}\right\}.$$
	In particular, they proved that $\dim_H \widetilde{R}(\varphi)=1/\gamma$ and $\dim_H R(\varphi)=1/\beta$, where $\gamma$ and $\beta$ are defined as above in Analogies~5 and~6.
	
	Consider the sets
	\begin{gather*}
		\widetilde{R}_\text{mod}(\varphi)=\left\{x\in(0,1]\colon \frac{p'_{n+1}(x)}{p'_n(x)} \geq \varphi(n)\text{ for all }n\in\mathbb{N}\right\},\\
		\mathcal{T}\left(\widetilde{R}(\varphi)\right)=\left\{x\in(0,1]\colon \frac{p'_{n+1}(x)-n}{p'_n(x)-n+1}\geq \varphi(n)\text{ for all }n\in\mathbb{N}\right\}.
	\end{gather*}
	Denote $\varphi_1(n)=(n+1)\varphi(n)$ and $\varphi_2(n)=\max\left\{1,\varphi(n)-1\right\}$.
	Since
	\begin{gather*}
		\frac{p'_{n+1}(x)-n}{p'_n(x)-n+1}\geq \varphi_1(n) \Longrightarrow \frac{p'_{n+1}(x)}{p'_{n}(x)}\geq \varphi(n) \Longrightarrow \frac{p'_{n+1}(x)-n}{p'_n(x)-n+1}\geq \varphi_2(n),
	\end{gather*}
	then
	$$\mathcal{T}\left(\widetilde{R}(\varphi_1)\right) \subseteq\widetilde{R}_\text{mod}\left(\varphi\right)\subseteq \mathcal{T}\left(\widetilde{R}(\varphi_2)\right)$$
	and
	$$\dim_H\mathcal{T}\left(\widetilde{R}(\varphi_1)\right) \leq\dim_H \widetilde{R}_\text{mod}\left(\varphi\right)\leq \dim_H \mathcal{T}\left(\widetilde{R}(\varphi_2)\right).$$
	Using the same argument as for the set $\widetilde{F}_\text{mod}(\varphi)$ in Analogy~\ref{5}, we obtain 
	$$\dim_H \mathcal{T}\left(\widetilde{R}(\varphi)\right)=\dim_H \widetilde{R}(\varphi).$$
	Moreover,
	$$\limsup_{n\to\infty}\frac{\log\log \varphi_1}{n}=\limsup_{n\to\infty}\frac{\log\log \varphi_2}{n}=\limsup_{n\to\infty}\frac{\log\log \varphi}{n}.$$
	Thus, 
	$\dim_H \widetilde{R}_\text{mod}\left(\varphi\right)= \dim_H \mathcal{T}\left(\widetilde{R}(\varphi_1)\right) =\dim_H \mathcal{T}\left(\widetilde{R}(\varphi_2)\right) =1/\gamma.$
	
	Arguing as in Analogies 5 and 6, we obtain that the corresponding sets defined in terms of $p'_n(x)$, $q_n(x)$, and $\widetilde{q}_n(x)$ with the conditions ``for all $n$'' and ``for all sufficiently large $n$'' have Hausdorff dimension $1/\gamma$, whereas those defined with the condition ``for infinitely many $n$'' have Hausdorff dimension $1/\beta$. Thus, Theorem~1.5 and Theorem~1.6 in \cite{LLL2025_1} follow directly from the result Fang and Wu in combination with the fractal (quasi-)equivalence principles.
\end{analogy}

\begin{analogy}
	In \cite[Theorem 3.1]{ShangWu2021} and \cite[Theorem 3.2]{ShangWu2020}, L.~Shang and M.~Wu defined and investigated the set
	$$E_\varphi=\left\{x\in(0,1]\colon \lim_{n\to\infty}\frac{\log p_n(x)}{\varphi(n)}=1\right\},$$
	where $\varphi\colon\mathbb{N}\to\mathbb{R}^+$	is a non-decreasing function satisfying $\lim\limits_{n\to\infty}\varphi(n)=\infty$. Assume that $$\lim_{n\to\infty} \frac{\varphi(n)}{\log n}=\gamma\in[0,\infty]\qquad\text{and}\qquad\limsup_{n\to\infty}\frac{\varphi(n+1)}{\varphi(1)+\cdots+\varphi(n)}=\xi.$$
	Then they proved that
	\begin{equation*}
		\dim_H E_\varphi=
		\begin{cases}
			\begin{aligned}
				&0, && \text{if } \gamma\in [0,1), \\
				&1-\gamma^{-1}, && \text{if } \gamma\in [1,\infty), \\
				&(1+\xi)^{-1}, && \text{if } \gamma=\infty.
			\end{aligned}
		\end{cases}
	\end{equation*}
	An analogous set for the Pierce expansion in the traditional notation,
	$$\widetilde{E}_\varphi=\left\{x\in(0,1)\setminus\mathbb{Q}\colon \lim_{n\to\infty}\frac{\log \widetilde{q}_n(x)}{\varphi(n)}=1\right\},$$
	was investigated by M.W. Ahn in \cite[Theorem 1.1]{A2024}, where its Hausdorff dimension was also calculated. 
	
	Now consider the set
	$$\mathcal{F}\bigl(\mathcal{T}(E_\varphi)\bigr)=\left\{x\in(0,1)\setminus\mathbb{Q}\colon\lim_{n\to\infty}\frac{\log(q_n(x)-n+1)}{\varphi(n)}=1\right\}.$$
	If $\gamma>1$, then
	$$\lim_{n\to\infty}\frac{\log \widetilde{q}_n(x)}{\varphi(n)}=1\iff\lim_{n\to\infty}\frac{\log q_n(x)}{\varphi(n)}=1\iff\lim_{n\to\infty}\frac{\log(q_n(x)-n+1)}{\varphi(n)}=1.$$ 
	Thus, $\widetilde{E}_\varphi=\mathcal{F}\bigl(\mathcal{T}(E_\varphi)\bigr)$.
	
	Assume that $\gamma\in(2,\infty]$. Define the function $$\psi(n)= \exp\left(\frac{\gamma+6}{2(\gamma+2)}\cdot\varphi(n)\right);$$ 
	here $0<\frac{\gamma+6}{2(\gamma+2)}<1$. Then $$\varphi(n)>\frac{\gamma+2}{2}\log n, \qquad \psi(n)>n^\frac{\gamma+6}{4},\qquad\text{and}\qquad \frac{n}{\psi(n)}<\frac{1}{n^{\frac{\gamma+2}{4}}}$$ for all sufficiently large $n$. Therefore, $\sum_{n=1}^\infty \frac{n}{\psi(n)}<\infty$. Moreover, for any $x\in E_\varphi$, we have $$p_n(x)=e^{\varphi(n)(1+\varepsilon_n(x))}>\psi(n)$$
	for all sufficiently large $n$,
	where $\varepsilon_n(x)\to 0$ as $n\to\infty$. Consequently, $x\in\mathfrak{A}_\psi$, and thus $E_\varphi\subset\mathfrak{A}_\psi$. Then Theorems~\ref{preserveTh1} and~\ref{preserveTh2} (fractal (quasi-)equivalence principles for Perron and Engel expansions) imply that
	$$\dim_H \widetilde{E}_\varphi=\dim_H E_\varphi=1-\frac{1}{\gamma}.$$
	In the case $\gamma=\infty$, an analogous argument yields $$\dim_H \widetilde{E}_\varphi=\dim_H E_\varphi=\frac{1}{1+\xi}.$$
	Hence, for $\gamma\in(2,\infty]$, the result of Ahn follows from the result of Shang and Wu by Theorems~\ref{preserveTh1} and~\ref{preserveTh2}, whereas our method does not directly apply in the case $\gamma\in[1,2)$.
\end{analogy}

\begin{analogy}
	In \cite[Theorem~4.2]{FangShang2024}, L. Fang and L. Shang considered the set
	$$D_\varphi(\alpha,\beta)=\left\{x\in(0,1]\colon \liminf_{n\to\infty}\frac{\log p_n(x)-n}{\varphi(n)}=\alpha, \limsup_{n\to\infty}\frac{\log p_n(x)-n}{\varphi(n)}=\beta\right\},$$
	where $\varphi\colon \mathbb{N}\to\mathbb{R}^+$ is an increasing function satisfying
	\begin{equation}\label{condphi2}
		\lim_{n\to\infty}\varphi(n)=\infty\qquad\text{and}\qquad \lim_{n\to\infty} \bigl(\varphi(n+1)-\varphi(n)\bigr)=0.
	\end{equation}
	They proved that $D_\varphi(\alpha,\beta)$ has full Hausdorff dimension for all $\alpha,\beta\in[-\infty,+\infty]$ with $\alpha\leq \beta$. An analogous set for the Pierce expansion in the traditional notation was studied by C.~Long, L.~Lu, and Y.~Liang in \cite[Theorem~1.1]{LLL2025_2}, where it was shown that this set also has full Hausdorff dimension.
	
	Let $x\in D_\varphi(\alpha,\beta)$ with $\alpha\in \mathbb{R}$ and $\beta\geq\alpha$. Under the conditions \eqref{condphi2},
	\begin{equation}\label{1}
		p_n(x)\geq e^{n+(\alpha-1)\varphi(n)}\geq 2^n=\psi(n)
	\end{equation}
	for all sufficiently large $n$ and  $D_\varphi(\alpha,\beta)\subset \mathfrak{A}_{2^n}$. Therefore, using the fractal quasi-equivalence principle for Engel expansions, we obtain the set
	\begin{equation*}
		\mathcal{T}\left(D_\varphi(\alpha,\beta)\right)=\left\{x\in(0,1]\colon \liminf_{n\to\infty}\frac{\log \log\bigl(p'_n(x)-n+1\bigr)}{\varphi(n)}=\alpha,~  \limsup_{n\to\infty}\frac{\log \bigl(p'_n(x)-n+1\bigr)-n}{\varphi(n)}=\beta\right\}
	\end{equation*}
	has full Hausdorff dimension. If $\alpha=+\infty$, the arguments are similar and $\dim_H D'_\varphi(+\infty,+\infty)=1$.
	
	Define the set
	$$D'_\varphi(\alpha,\beta)=\left\{x\in(0,1]\colon \liminf_{n\to\infty}\frac{\log p'_n(x)-n}{\varphi(n)}=\alpha, \limsup_{n\to\infty}\frac{\log p'_n(x)-n}{\varphi(n)}=\beta\right\}.$$
	If $x\in D_\varphi(\alpha,\beta)$ or $x\in D'_\varphi(\alpha,\beta)$ with $\alpha>-\infty$, then $p'_n(x)\geq \psi(n)$ for all sufficiently large $n$ and
	$$\lim_{n\to\infty}\left(\log p'_n(x)-\log \bigl(p'_n(x)-n+1\bigr)\right)=0.$$
	Thus, assuming $\alpha>-\infty$, we conclude that $\mathcal{T}\bigl(D_\varphi(\alpha,\beta)\bigr)=D'_\varphi(\alpha,\beta)$ and $$\dim_H D'_\varphi(\alpha,\beta)=1.$$
	Moreover, using the fractal equivalence principles for Perron expansions, we obtain that the corresponding sets defined in terms of $q_n(x)$ and $\widetilde{q}_n(x)$ also have full Hausdorff dimension. Hence, for $\alpha>-\infty$, the result of Long, Lu and Liang follows from the result of Fang and Shang, whereas our method does not directly apply in the case $\alpha=-\infty$.
\end{analogy}

\begin{remark}
	To the best of our knowledge, all intermediate results obtained in Analogies 4--10 for the modified Engel expansions are presented here for the first time. In particular, they can be derived not only from the properties of the classical Engel expansions, but also from the corresponding properties of the Pierce expansions in combination with the fractal equivalence principle for the Perron expansions.
\end{remark}

\section{New analogies between the classical Engel, modified Engel, and Pierce expansions}

Throughout this section, $p_n(x)$ and $p'_n(x)$ denote the $n$th digits of the classical and modified Engel expansion of $x$, respectively. Similarly, $q_n(x)$ and $\widetilde{q}_n(x)$ denote the $n$th digits of the Pierce expansion of $x$ in the Perron and traditional notations, respectively.

\begin{theorem}
	Let $\varphi\colon\mathbb{N}\to\mathbb{R}^+$ be a function satisfying $\lim\limits_{n\to\infty}\varphi(n)=\infty$, and let $R'_n(x)=\frac{p'_n(x)}{p'_{n-1}(x)}$. Then 
	$$\dim_H \left\{x\in(0,1]\colon \lim_{n\to\infty}\frac{\log R'_n(x)}{\varphi(n)}=1\right\}=\frac{1}{1+\theta},$$
	where
	$$\theta=\limsup_{n\to\infty}\frac{\sum_{k=1}^{n+1}\varphi(k)}{\sum_{k=1}^{n}(n-k+1)\varphi(k)}.$$
\end{theorem}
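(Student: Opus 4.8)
\emph{Plan.} I will transfer the problem to the classical Engel expansion by means of the fractal quasi-equivalence principle (Theorem~\ref{preserveTh2}), dispatch the upper bound via the Shang--Wu dimension formula, and obtain the matching lower bound from a Cantor-set construction whose Hausdorff measure is estimated through the faithful family $\mathfrak{P}_E$. Throughout put $S_n=\sum_{k=1}^n\varphi(k)$ and $T_n=\sum_{j=1}^n S_j=\sum_{k=1}^n(n-k+1)\varphi(k)$, so that $\gamma=\limsup_n S_{n+1}/T_n$ and $\frac1{1+\gamma}=\liminf_n T_n/T_{n+1}$; since $\varphi(n)\to\infty$ we have $\varphi(n)\ge1$ for large $n$, hence $S_n\ge n-C$ and $S_n/\log n\to\infty$.

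\emph{Reduction.} Denote by $D'$ the set in the statement. Using the bijection $\mathcal{T}$ with $p'_n(\mathcal{T}(x))=p_n(x)+n-1$ and the elementary identity $\log\frac{p_n+n-1}{p_{n-1}+n-2}=\log\frac{p_n}{p_{n-1}}+o(1)$, valid whenever $p_n(x)/n\to\infty$, one checks that $D'=\mathcal{T}(E^\varphi)$, where $E^\varphi=\{x\in(0,1]\colon \lim_n\log(p_n(x)/p_{n-1}(x))/\varphi(n)=1\}$ and $p_n$ are the classical Engel digits (on either set $\varphi(n)\to\infty$ forces $p_n(x)/n\to\infty$, which legitimizes the identity in both directions). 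Telescoping gives $\log p_n(x)\sim S_n$ on $E^\varphi$, and since $S_n/\log n\to\infty$ this yields $p_n(x)\ge n^3$ for all large $n$, i.e.\ $E^\varphi\subset\mathfrak{A}_\psi$ with $\psi(n)=n^3$ and $\sum_n n/\psi(n)<\infty$. Hence Theorem~\ref{preserveTh2} gives $\dim_H D'=\dim_H E^\varphi$, and it remains to prove $\dim_H E^\varphi=\frac1{1+\gamma}$. For the upper bound, $\log p_n(x)\sim S_n$ means $E^\varphi\subset E_S:=\{x\colon\lim_n\log p_n(x)/S_n=1\}$, which is precisely the Shang--Wu set $E_\phi$ for the non-decreasing unbounded function $\phi=S$; as $\lim_n S_n/\log n=\infty$, the corresponding case of \cite[Theorem~3.1]{ShangWu2021} yields $\dim_H E_S=\frac1{1+\xi}$ with $\xi=\limsup_n S_{n+1}/(S_1+\dots+S_n)=\gamma$, so $\dim_H E^\varphi\le\frac1{1+\gamma}$.

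\emph{Lower bound.} I may assume $\gamma<\infty$. Fix $s<\frac1{1+\gamma}$, put $M_n=\lceil e^{S_n}\rceil$, and let $\mathcal{C}$ be the Cantor set whose admissible $E$-digits at level $n$ are $M_n,M_n+1,\ldots,2M_n$ (with finitely many initial digits fixed so that $c_n\ge c_{n-1}$, which is automatic once $\varphi(n)>\log4$). Along every branch $\log c_n=S_n+O(1)$, hence $\log(c_n/c_{n-1})=\varphi(n)+O(1)=\varphi(n)(1+o(1))$ and $\mathcal{C}\subset E^\varphi$. Equip $\mathcal{C}$ with the natural measure $\mu$ distributing mass uniformly among the $t_n=M_n+1$ children of each cylinder, so $\mu(\Delta^E_{c_1\ldots c_n})=\prod_{k\le n}t_k^{-1}=e^{-T_n+O(1)}$, while $|\Delta^E_{c_1\ldots c_n}|=\frac1{c_1\cdots c_{n-1}c_n(c_n-1)}=e^{-(T_n+S_n)(1+o(1))}$. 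For $M=\bigcup_{i=a}^b\Delta^E_{c_1\ldots c_{m-1}i}\in\mathfrak{P}_E$ meeting $\mathcal{C}$, only the $N:=\#([a,b]\cap[M_m,2M_m])\le M_m+1$ admissible sub-cylinders (all of comparable diameter) carry $\mu$-mass, and a direct computation gives $\mu(M)=N\,e^{-T_m+O(1)}$ and $|M|\gtrsim N\,e^{-(T_m+S_m)(1+o(1))}$, whence, using $N\le e^{S_m+O(1)}$,
\[
\frac{\mu(M)}{|M|^{s}}\;\lesssim\;N^{1-s}e^{\,sS_m-(1-s)T_m+o(T_m)}\;\le\;e^{\,S_m-(1-s)T_m+o(T_m)}\;=\;e^{-((1-s)-S_m/T_m)T_m+o(T_m)},
\]
which tends to $0$ uniformly in $M$ because $1-s>\frac{\gamma}{1+\gamma}=\limsup_m S_m/T_m$ (here $\gamma<\infty$ is used to absorb the $o(T_m)$-terms, as then $T_{m+1}\asymp T_m$). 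Thus $\mu(M)\le C_s|M|^{s}$ for every $M\in\mathfrak{P}_E$ of small diameter, so $H^s(\mathcal{C},\mathfrak{P}_E)\ge\mu(\mathcal{C})/C_s>0$; by the faithfulness of $\mathfrak{P}_E$ (Theorem~\ref{theoremdovirchistdim}), $H^s(\mathcal{C})>0$ and $\dim_H E^\varphi\ge\dim_H\mathcal{C}\ge s$. Letting $s\uparrow\frac1{1+\gamma}$ finishes the proof.

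\emph{Main obstacle.} The genuinely delicate step is the measure estimate in the lower bound: because consecutive $E$-cylinders of a fixed rank differ enormously in size, for a union $M$ of such cylinders one must carefully decouple the ``number of admissible pieces'' factor $N$ from the ``common scale'' factor $e^{-(T_m+S_m)}$ when bounding $\mu(M)/|M|^{s}$, and keep all accumulated error terms at the negligible level $o(T_m)$. Phrasing everything through the faithful family $\mathfrak{P}_E$ — so that only such unions of consecutive cylinders, rather than arbitrary balls and Billingsley's lemma, need to be controlled — is exactly what makes this bookkeeping tractable and is where the machinery of Section~3 pays off. The only other point requiring care, easily checked, is verifying the hypotheses of Theorem~\ref{preserveTh2}: the bare assumption $\varphi(n)\to\infty$ already places $E^\varphi$ inside some $\mathfrak{A}_\psi$ with $\sum_n n/\psi(n)<\infty$, because it forces $S_n/\log n\to\infty$, which also puts us in the $\gamma=\infty$ regime of the Shang--Wu formula used for the upper bound.
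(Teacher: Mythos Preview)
Your proof is correct, and the reduction step --- showing $D'=\mathcal{T}(E^\varphi)$ with $E^\varphi\subset\mathfrak{A}_\psi$ for a suitable $\psi$, then invoking Theorem~\ref{preserveTh2} --- matches the paper's approach exactly (the paper uses $\psi(n)=2^n$ via $p_n\ge 3p_{n-1}$ eventually, you use $\psi(n)=n^3$ via $\log p_n\sim S_n$; both work). The difference lies in how $\dim_H E^\varphi$ is obtained. The paper simply quotes \cite[Theorem~5.1]{ShangWu2021}, which is precisely the statement $\dim_H R_\varphi=\tfrac{1}{1+\gamma}$ for your set $E^\varphi=R_\varphi$; the whole proof is then three lines. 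You instead re-derive this result: the upper bound by embedding $E^\varphi$ into the Shang--Wu set $E_S$ of \cite[Theorem~3.1]{ShangWu2021} with $\phi=S$, and the lower bound by a hands-on Cantor construction and mass distribution argument through the faithful family $\mathfrak{P}_E$. Your route is considerably longer but more self-contained (it needs only the simpler Theorem~3.1 of Shang--Wu rather than their Theorem~5.1) and gives an independent confirmation of the latter; the paper's route is a direct citation and is in keeping with its purpose of illustrating how the transfer principles turn known results into new ones without fresh dimension computations.
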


\begin{proof}
	In \cite[Theorem 5.1]{ShangWu2021}, L. Shang and M. Wu considered the set
	$$R(\varphi)=\left\{x\in(0,1]\colon \lim_{n\to\infty}\frac{\log R_n(x)}{\varphi(n)}=1\right\},$$
	where $\varphi\colon\mathbb{N}\to\mathbb{R}^+$ is a function satisfying $\lim\limits_{n\to\infty}\varphi(n)=\infty$, $R_1(x)=p_1(x)$, and $R_n(x)=\frac{p_n(x)}{p_{n-1}(x)}$ for $n\geq2$. They proved that $\dim_H R(\varphi)=\frac{1}{1+\theta}$, where $\theta$ as above.
	If $x\in R(\varphi)$, then $p_n(x)\geq 3p_{n-1}(x)$ and $p_n(x)\geq 2^n$ for all sufficiently large $n$. Hence $R(\varphi)\subseteq\mathfrak{A}_{2^n}$. Since
	$$\lim_{n\to\infty}\frac{\log R_n(x)}{\varphi(n)}=1\iff\lim_{n\to\infty}\frac{\log R'_n\bigl(\mathcal{T}(x)\bigr)}{\varphi(n)}=1,$$
	we have
	$$\mathcal{T}\bigl(R(\varphi)\bigr)=\left\{x\in(0,1]\colon \lim_{n\to\infty}\frac{\log R'_n(x)}{\varphi(n)}=1\right\}$$
	and hence $\dim_H \mathcal{T}\bigl(R(\varphi)\bigr)=\dim_H R(\varphi)=\frac{1}{1+\theta}$. 
\end{proof}

\begin{corollary}
	Let $\varphi\colon\mathbb{N}\to\mathbb{R}^+$ be a function satisfying $\lim\limits_{n\to\infty}\varphi(n)=\infty$. Then the sets
	$$\left\{x\in(0,1)\setminus\mathbb{Q}\colon \lim_{n\to\infty}\frac{\log\frac{q_n(x)}{q_{n-1}(x)}}{\varphi(n)}=1\right\}$$
	and
	$$\left\{x\in(0,1)\setminus\mathbb{Q}\colon \lim_{n\to\infty}\frac{\log\frac{\widetilde{q}_n(x)}{\widetilde{q}_{n-1}(x)}}{\varphi(n)}=1\right\}$$
	coincide and have Hausdorff dimension $\frac{1}{1+\theta}$, where $\theta$ as above.
\end{corollary}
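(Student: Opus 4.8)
The plan is to derive the corollary by combining the theorem above with the fractal equivalence principle for the Perron expansions (Theorem~\ref{preserveTh1}), and then to pass from the Perron to the traditional notation of the Pierce expansion by observing that, in this particular situation, the two sets in the statement are literally equal, so no further principle is needed.

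First I would invoke the theorem above: the set
$$S'=\left\{x\in(0,1]\colon \lim_{n\to\infty}\frac{\log\frac{p'_n(x)}{p'_{n-1}(x)}}{\varphi(n)}=1\right\}$$
satisfies $\dim_H S'=\frac{1}{1+\gamma}$. The modified Engel and the Pierce expansions are the positive and alternating Perron expansions attached to the \emph{same} sequence $P$ (here $\varphi_0=1$ and $\varphi_n(x_1,\dots,x_n)=x_n$), so $\mathcal{F}_P$ is a digit-preserving bijection of $(0,1]$ onto $(0,1)\setminus IS^{P^-}$; in particular $q_n(\mathcal{F}_P(x))=p'_n(x)$ for all $n$ and all $x$. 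Hence
$$\mathcal{F}_P(S')=\left\{x\in(0,1)\setminus IS^{P^-}\colon \lim_{n\to\infty}\frac{\log\frac{q_n(x)}{q_{n-1}(x)}}{\varphi(n)}=1\right\},$$
and Theorem~\ref{preserveTh1} gives $\dim_H\mathcal{F}_P(S')=\dim_H S'=\frac{1}{1+\gamma}$. Since $IS^{P^-}$ and $\mathbb{Q}$ are countable and the Hausdorff dimension is unaffected by countable modifications, the set $\{x\in(0,1)\setminus\mathbb{Q}\colon \lim_{n\to\infty}(\log(q_n(x)/q_{n-1}(x)))/\varphi(n)=1\}$ also has dimension $\frac{1}{1+\gamma}$.

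For the traditional notation I would show that the two sets in the statement coincide. For every $x\in(0,1)\setminus\mathbb{Q}$ the Pierce digits form a strictly increasing sequence of integers, so $q_n(x)\to\infty$ and $\widetilde q_n(x)=q_n(x)-1\to\infty$; therefore
$$\log\frac{\widetilde q_n(x)}{\widetilde q_{n-1}(x)}-\log\frac{q_n(x)}{q_{n-1}(x)}=\log\frac{1-1/q_n(x)}{1-1/q_{n-1}(x)}\longrightarrow 0\qquad (n\to\infty).$$
Dividing by $\varphi(n)\to\infty$, the quotients $\frac{\log(q_n/q_{n-1})}{\varphi(n)}$ and $\frac{\log(\widetilde q_n/\widetilde q_{n-1})}{\varphi(n)}$ have the same asymptotic behaviour, so one converges to $1$ exactly when the other does. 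Thus the two sets are equal, and by the previous step each has Hausdorff dimension $\frac{1}{1+\gamma}$.

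I do not expect a genuine obstacle here: the substantive content is already in the theorem above and in Theorem~\ref{preserveTh1}, and what remains is bookkeeping — the countability argument for passing between $(0,1)\setminus IS^{P^-}$ and $(0,1)\setminus\mathbb{Q}$, and the observation that the constant shift $q_n=\widetilde q_n+1$ becomes negligible once normalised by $\varphi(n)$ (it affects any fixed term but not the limit). It is worth noting that, unlike the situation handled by Corollary~\ref{preserveCor3}, no hypothesis of the form $a_1\ge 2$ is needed here, precisely because the two digit sets coincide exactly rather than merely up to the action of $\mathcal{G}$.
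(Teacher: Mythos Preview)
Your proof is correct and follows precisely the route the paper intends: the corollary is stated without proof immediately after the theorem for the modified Engel expansion, and the reader is meant to combine that theorem with Theorem~\ref{preserveTh1} (the fractal equivalence principle) and then observe, as you do, that the shift $q_n=\widetilde q_n+1$ is asymptotically negligible after dividing by $\varphi(n)\to\infty$, so the two Pierce sets literally coincide. Your treatment of the countable discrepancy between $(0,1)\setminus IS^{P^-}$ and $(0,1)\setminus\mathbb{Q}$ is also appropriate and in keeping with how the paper passes between these domains elsewhere.
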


\begin{theorem}\label{6.9}
	For all $k\in(1,\infty)$,
	$$\dim_H \left\{x\in(0,1]\colon \frac{p'_{n+1}(x)}{p'_n(x)}\leq k\text{ ~for all }n\in\mathbb{N}\right\}=1.$$
\end{theorem}

\begin{proof}
	In \cite[Corollary~1.10]{A2024}, M.W. Ahn proved that
	$$\dim_H \left\{x\in(0,1)\setminus\mathbb{Q}\colon \frac{\widetilde{q}_{n+1}(x)}{\widetilde{q}_n(x)}\leq k\text{ ~for all }n\in\mathbb{N}\right\}=1$$ 
	for all $k\in(1,\infty)$. Since $\frac{q_{n+1}(x)}{q_{n}(x)}<\frac{\widetilde{q}_{n+1}(x)}{\widetilde{q}_n(x)}$, it follows that
	$$\dim_H \left\{x\in(0,1)\setminus\mathbb{Q}\colon \frac{q_{n+1}(x)}{q_n(x)}\leq k\text{ ~for all }n\in\mathbb{N}\right\}=1.$$ 
	Applying Theorem~\ref{preserveTh1} (the fractal equivalence principle for the Perron expansions) completes the proof.
\end{proof}

Theorem~\ref{6.9} extends that part of the result of Wang and Wu from \cite[Example~4]{WW2007} which concerns the modified Engel expansion.

\section*{Acknowledgements}
We thank the anonymous referees for their valuable comments and suggestions, which have significantly improved the clarity and structure of the paper and helped reveal additional applications of our results. This work was supported by a grant from the Simons Foundation (SFI-PD-Ukraine-00014586, M.M.).


\begin{thebibliography}{99}

\bibitem{A2024}
M. W. Ahn, \emph{Hausdorff dimensions in Pierce expansions}, Acta Arith. \textbf{215} (2024), 115--160. \url{https://doi.org/10.4064/aa230427-18-3}

\bibitem{A2025}
M. W. Ahn, \emph{Convergence exponent of Pierce expansion digit sequences}, Int. J. Number Theory \textbf{21} (2025), 1967--1993. \url{https://doi.org/10.1142/S1793042125500952}

\bibitem{AILT2020}
S. Albeverio, G. Ivanenko, M. Lebid and G. Torbin, \emph{On the Hausdorff dimension faithfulness and the Cantor series expansion}, Methods Funct. Anal. Topol. \textbf{26} (2020), 298--310.
\url{https://doi.org/10.31392/MFAT-npu26_4.2020.01}

\bibitem{AKNT2016}
S. Albeverio, Yu. Kondratiev, R. Nikiforov and G. Torbin, \emph{On new fractal phenomena connected with infinite linear IFS}, Mathematische Nachrichten \textbf{290} (2016), 1163--1176.
\url{https://doi.org/10.1002/mana.201500471}

\bibitem{BHP2023} 
O. M. Baranovskyi, B. I. Hetman and M. V. Pratsiovytyi, \textit{Cylindrical sets of $E$-representation of numbers and fractal Hausdorff--Besicovitch dimension}, Bukovinian Math. Journal. \textbf{11} (2023), 63--70 (in Ukrainian). \url{https://doi.org/10.31861/bmj2023.01.05}

\bibitem{BPT2013}
O. Baranovskyi, M. Pratsiovytyi and G. Torbin, \emph{Ostrogradsky--Sierpi\'nski--Pierce series and their applications}, Naukova Dumka, Kyiv, 2013 (in Ukrainian).

\bibitem{FangShang2024}
L. Fang and L. Shang, \emph{On the exact rate of convergence of digits in Engel expansions}, J. Math. Anal.
Appl. \textbf{531} (2024), Paper No. 127726, 16 pp. \url{https://doi.org/10.1016/j.jmaa.2023.127726}

\bibitem{FangWu2018}
L. Fang and M. Wu, \emph{Hausdorff dimension of certain sets arising in Engel expansions}, Nonlinearity \textbf{31} (2018), 2105--2125. \url{https://doi.org/10.1088/1361-6544/aaaaf9}

\bibitem{FengTan2020}
Y. Feng and B. Tan, \emph{Hausdorff dimensions of some exceptional sets in the first Ostrogradsky series}, Fractals \textbf{28} (2020), Paper No. 2050026, 14 pp.
\url{https://doi.org/10.1142/S0218348X20500267}

\bibitem{GNT2014}
I. Garko, R. Nikiforov and G. Torbin, \emph{$G$-isomorphism of number systems and faithfulness of covering systems. I}, Sci. Bull. Dragomanov Nat. Pedagogical. Univer. Ser. 1. Phys. Mat. \textbf{16}:1 (2014), 120--133 (in Ukrainian).

\bibitem{GNT2017}
I. Garko, R. Nikiforov and G. Torbin, \emph{On the $G$-isomorphism of probability and dimensional theories of representations of real numbers and fractal faithfulness of systems of coverings}, Theory Probab. Math. Stat. \textbf{94} (2017), 17--36.  \url{http://dx.doi.org/10.1090/tpms/1006}

\bibitem{LZh2017}
J. Liu and Zh. Zhang, \emph{On the Hausdorff dimension faithfulness connected with $Q_{\infty}$-expansion}, Nonlinearity \textbf{30} (2017), 2268--2278. 
\url{https://doi.org/10.1088/1361-6544/aa67a2}

\bibitem{LZh2016}
J. Liu and Z. Zhang, \emph{On the Hausdorff dimension faithfulness of continued fraction expansion}, Comptes Rendus Mathematique \textbf{354} (2016) 874--878. 
\url{https://doi.org/10.1016/j.crma.2016.07.009}


\bibitem{LLL2025_1}
C. Long, L. Lu and Y. Liang, \emph{Hausdorff dimension of the limsup and liminf sets in Pierce expansions}, Fractals \textbf{33} (2025), Paper No. 2550029, 10 pp.
\url{https://doi.org/10.1142/S0218348X2550029X}

\bibitem{LLL2025_2}
C. Long, L. Lu and Y. Liang, \emph{Multifractal analysis of exceptional sets associated with the law of the iterated logarithm of Pierce expansions}, Fractals \textbf{33} (2025), Paper No. 2550067, 11 pp.
\url{https://doi.org/10.1142/S0218348X25500677}

\bibitem{LLSh2026}
L. Lu, C. Long and L. Shang, \emph{Limit theorems and fractal properties of digit gaps in Pierce expansions}, J. Math. Anal. Appl. \textbf{555} (2026), Paper No. 130019, 22 pp. \url{https://doi.org/10.1016/j.jmaa.2025.130019}

\bibitem{Moroz2025}
M. Moroz, \emph{Representation of Real Numbers by Alternating Perron Series and Their Geometry}, Expos. Math. \textbf{43} (2025), Paper No. 125635, 18 pp.  \url{https://doi.org/10.1016/j.exmath.2024.125635}

\bibitem{Moroz2024}
M. Moroz, \emph{Representation of Real Numbers by Perron Series, Their Geometry, and Some Applications}, J. Math. Sci. \textbf{279} (2024), 384--399. \url{https://doi.org/10.1007/s10958-024-07020-4}

\bibitem{Renyi1962-1}
A. R\'enyi, 
\emph{A new approach to the theory of Engel's series}, Ann. Univ. Sci. Budapest. Sect. Math. \textbf{5}  (1962), 25--32.

\bibitem{Shallit1986}
J. O. Shallit, \emph{Metric theory of Pierce expansions}, Fibonacci Quart. \textbf{24} (1986), 22--40.

\bibitem{ShangWu2021_1}
L. Shang and M. Wu, \emph{On the exponent of convergence of the digit sequence of Engel series}, J. Math. Anal. Appl. \textbf{504} (2021), Paper No. 125368, 15 pp. \url{https://doi.org/10.1016/j.jmaa.2021.125368}

\bibitem{ShangWu2021}
L. Shang and M. Wu, \emph{On the growth speed of digits in Engel expansions}, J. Number Theory \textbf{219} (2021), 368--385. \url{https://doi.org/10.1016/j.jnt.2020.09.014}

\bibitem{ShangWu2020}
L. Shang and M. Wu, \emph{Slow Growth Rate of the Digits in Engel Expansions}, Fractals \textbf{28} (2020), Paper No. 2050047, 10 pp. \url{https://doi.org/10.1142/S0218348X20500474}

\bibitem{SZhL2017}
Yu Sun, Zh. Zhang and J. Liu, \emph{On the Hausdorff dimension faithfulness of Oppenheim expansion}, Acta Arithmetica \textbf{180} (2017), 89--99. \url{https://doi.org/10.4064/aa8648-2-2017}

\bibitem{TV2025}
G. Torbin and Yu. Voloshyn, \emph{On faithfulness, DP-transformations and Cantor series expansions}, Methods Funct. Anal. Topol. \textbf{31} (2025), 360--370. \url{https://doi.org/10.31392/MFAT-npu26_4.2025.06}

\bibitem{WW2007}
B. W. Wang and J. Wu, \emph{A problem of Galambos on Oppenheim series expansions}, Publ. Math. Debrecen \textbf{70} (2007), 45--58. \url{https://doi.org/10.5486/PMD.2007.3374}

\bibitem{WW2006}
B. W. Wang and J. Wu, \emph{The growth rates of digits in the Oppenheim series expansions}, Acta Arith. \textbf{121} (2006), 175--192. \url{https://doi.org/10.4064/aa121-2-6}

\bibitem{Wu2003}
J. Wu, \emph{The Oppenheim series expansions and Hausdorff dimensions}, Acta Arith. \textbf{107} (2003), 345--355. \url{https://doi.org/10.4064/aa107-4-4}

\bibitem{ZhPr2012}
Yu. Zhykharyeva and M. Pratsiovytyi, \emph{Expansions of numbers in positive L\"{u}roth series and their applications to metric, probabilistic and fractal theories of numbers}, Algebra Discrete Math. \textbf{14} (2012), 145--160.

\end{thebibliography}
\end{document}